\newtheorem{theorem}{Theorem}[section]
\newtheorem{lemma}[theorem]{Lemma}
\newtheorem{proposition}[theorem]{Proposition}
\newtheorem{corollary}[theorem]{Corollary}
\newtheorem{remark}[theorem]{Remark}
\title{\LARGE \bf Optimal Sparse Output Feedback Controller Design:\\
  A Rank Constrained Optimization Approach}  
\author{Reza Arastoo$^\dagger$ \and Nader Motee$^\dagger$ \and Mayuresh V. Kothare$^\ddagger$ 
\thanks{$\dagger$ R. Arastoo and N. Motee are with Department  of
  Mechanical Engineering  and  Mechanics,  Packard  Lab.,  Lehigh
  University,  Bethlehem, PA. Email Addresses: {\tt\small
    \{reza.arastoo,nader.motee\}@lehigh.edu}}  
\thanks {$\ddagger$ M. V. Kothare is with the Department of Chemical
  and Biomolecular Engineering, Iaccoca Hall, Lehigh University,
  Bethlehem, PA.
        Email Address: {\tt\small mayuresh.kothare@lehigh.edu}}}
\begin{document}

\maketitle
\thispagestyle{empty}
\pagestyle{plain}

\begin{abstract}
  \noindent We consider the problem of optimal sparse output feedback controller
  synthesis for continuous linear time invariant systems when the
  feedback gain is static and subject to specified structural
  constraints. Introducing an additional term penalizing the number
  of non-zero entries of the feedback gain into the optimization cost
  function, we show that this inherently non-convex problem can be
  equivalently cast as a rank constrained optimization, hence, it is
  an NP-hard problem. We further exploit our rank constrained approach to define a structured output feedback control feasibility test with global convergence property, then, obtain upper/lower bounds for the optimal cost of the sparse output feedback control problem. Moreover, we show that our problem reformulation allows us to incorporate additional implementation constraints, such as norm bounds on the control inputs or system output, by assimilating them
  into the rank constraint. We propose to utilize a version of the Alternating
  Direction Method of Multipliers (ADMM) as an efficient method to
  sub-optimally solve the equivalent rank constrained problem. As a
  special case, we study the problem of designing the sparsest stabilizing output feedback controller, and show that it is, in fact, a
  structured matrix recovery problem where the matrix of interest is
  simultaneously sparse and low rank. Furthermore, we show that this
  matrix recovery problem can be equivalently cast in the form of a
  canonical and well-studied rank minimization problem. We finally
  illustrate performance of our proposed methodology using 
  numerical examples.
\end{abstract}
\section{INTRODUCTION}
The problem of optimal linear quadratic controller design has been
extensively studied for several decades. In conventional control, it
is usually assumed that all measurements are accessible to a
centralized controller, while in large scale interconnected systems
this assumption is not practical, since it is often desirable that
subsystems only communicate with a few neighboring components due to
the high cost and, sometimes, infeasibility of communication
links. Therefore, the need to exploit a particular controller
structure, obtained based on the layout of the system network, seems
undeniable. Furthermore, the traditional controller synthesis methods,
which are closely related to solving the Algebraic Riccati Equation, no
longer work when additional constraints are imposed on the structure
of the controller. 

In general, the problem of designing constant gain feedback
controllers subject to additional constraints is NP-hard
\cite{Blondel:1997}. In recent years, numerous attempts have been made
to provide distributed controller synthesis approaches for different
classes of systems
\cite{Schuler:2011,Polyak:2013,Motee:2009,Lin:2011,Krishna:2013}. Bamieh
\emph{et al}, in \cite{Bamieh:2002,Bamieh:2005}, investigated the
distributed control of spatially invariant systems, then the work in
\cite{Motee:2008} has proved that the solution of Riccati and Lyapunov
equations for systems consisting of Spatially Decaying (SD) operators
has SD property, which lends credibility to the search for controllers
that have access only to local measurements. The design of optimal
state feedback gain in the presence of an \emph{a priori} specified
structure, usually in the form of sparsity patterns, is considered in
\cite{Lin:2011}. In their recent papers, Lavaei \emph{et al.}
\cite{Lavaei:2013,Fazelnia:2014,Madani:2014} cast the problem of
optimal decentralized control for discrete time systems as a rank
constrained optimization problem, developed results on the possible
rank of the resulting feasible set, and introduced several
rank-reducing heuristics as well. Wang \emph{et al.} studied the problem of localized $\text{LQR}$/$\text{LQG}$ control and presented a synthesis algorithm for large-scale localizable systems \cite{Wang:2014,Wang:2015}. Frequency domain approaches to design optimal decentralized controllers are also presented in
\cite{Rotkowitz:2006,Rotkowitz:2011,XinQi:2004}. 

In the design of linear feedback controllers for interconnected
systems, a common desired structure is that the controller matrices
are sparse, which  could correspond to a simpler controller topology
and fewer sensors/actuators. However, fewer measurement/communication
links leads to performance deterioration and sometimes even
instability of the overall system. Therefore, there exists a trade off
between the stability and performance of the system and minimizing the
number of non-zero entries of the feedback gain matrices. The problem
of minimizing the number of nonzero elements of a vector/matrix
subject to a set of constraints in inherently NP-hard and arises in
many fields, such as Compressive Sensing (CS) where the inherent
sparseness of signals is exploited in determining them from relatively
few measurements \cite{Donoho:2006}. Since the advent of Compressive
Sensing, considerable work has been done on the design of compressive
measurement matrices based on different criteria such as sparse signal
support detection and estimation  \cite{DeVore:2007,Zahedi:2013},
sparse signal detection and classification
\cite{Zahedi:2012,Davenport:2010}, etc.  

To alleviate the issues caused by the combinatorial nature of
cardinality functions, several convex/non-convex functions have been
proposed as surrogates for the cardinality functions in optimization
problems. For example, in cases where the optimization constraint is
affine, $\ell_1$-norm , as a convex relaxation of $\ell_0$-norm, has proved to work reliably under certain conditions, namely Restricted Isometry Property (RIP) \cite{Candes:2004, Candes:2005,
  Candes:2008}. Thus, $\ell_1$-norm and its weighted versions have
been extensively used in signal processing and control applications
\cite{Lin:2013,Schuler:2011c,Arastoo:2012}. Non-convex relaxations of
the cardinality function, such as $\ell_q$-quasi-norm ($0<q<1$), have
also received considerable attention recently
\cite{Chartrand:2007,Lai:2011}. In
\cite{Motee:2013,Motee:2014,Motee:2014ACC},  it is shown that, for a
large class of SD systems, the quadratically-optimal feedback
controllers inherit spatial decay property from the dynamics of the
underlying system. Moreover, the authors have proposed a method, based
on new notions of $q$-Banach algebras, by which sparsity and spatial
localization features of the same class can be studied when $q$ is
chosen sufficiently small. 

In the present paper, we consider the problem of optimal sparse
feedback controller synthesis for linear time invariant system, in
which convex constraints are imposed on the structure of the
controller feedback gain. The main contribution of our paper is to
propose a novel approach which allows us to equivalently represent the
intrinsically nonlinear constraints, such as closed loop stability
condition and enforcement of controller structure, with a {\em single}
rank constraint in an otherwise convex optimization program. Having
all non-linearities encapsulated in only one rank constraint allows us
to employ one of several existing algorithms to efficiently solve the
resulting problem.

Our results are distinct from those reported in \cite{Lin:2013}, as we present an alternative formulation which not only solves the
regular sparse controller design problem, but also enables us to solve
the output feedback control problem. Furthermore, integrating various
types of nonlinear system constraints, such as constraints on the
controller matrix and its norms, into the existing rank
constraint can be effortlessly implemented in our approach. It should also be noted that the rank constraint emerging in our approach originates from the positive definiteness of the Lyapunov matrix and the properties of fixed rank matrices, thus the ratio of matrix dimension and its rank does not grow with the size of system. In contrast, the rank one constraint appears in \cite{Madani:2014} results from utilizing the auxiliary variable introduced by self multiplying the vector formed by augmenting the states, inputs, and outputs, hence there exist a linear growth of the ratio of the dimension of the matrix to its rank as the number of variable increases, which is a computational drawback in controller synthesis for large scale systems.

We start by augmenting the $\ell_0$-norm of the feedback gain matrix to the
quadratic cost function of our optimization problem. This additional
term penalizes the extra communication links in the feedback
pathway. We then reformulate it into an equivalent optimization
problem where the non-convex constraints are lumped into a rank
constraint. Based on the notions of holdable ellipsoid, we 
propose a reformulation of the problem to incorporate norm bounds on
the control inputs and outputs of the system, which usually appear in
controller implementations. Employing a convex relaxation of the added
cardinality term, based on the weighted $\ell_1$-norm, we argue that
Alternating Direction Method of Multipliers (ADMM) is well-suited to
solve our problem, since our search is to obtain a solution with
an a priori known rank. ADMM iteratively solves the rank-unconstrained
problem and  projects the solution into the space of the matrices
with the desired rank until the convergence criteria are met.  We
further investigate the special case of designing the sparsest
stabilizing controller, and show that this problem can be rewritten as a
rank minimization problem. Rank minimization problems have received
considerable attention in recent years \cite{Recht:2010, Recht:2011,
  Mesbahi:1997}. In \cite{Recht:2007}, it is shown that if a certain
Restricted Isometry Property holds for the linear transformation
defining the constraints, the minimum rank solution can be recovered
by solving the minimization of the nuclear norm over the feasible
space. Therefore, the nuclear norm may be used as a proxy for the rank
minimization in our problem.

The remainder of this paper is organized as follows. In Section
\ref{prob-formu-sec}, the general optimal sparse output feedback
control problem setup is defined. Section \ref{rank-cons-sec}, we
reformulate the optimal sparse output feedback control problem as a
rank constrained problem, and develop several results based on the
proposed reformulation. In Section \ref{sec:conv_relaxation}, we study
the convex relaxation of this problem, and discuss the application of
ADMM in solving the problem. The special case where the sparsity
penalizing factor dominates the quadratic terms in the cost function
is described in Section \ref{spar-patt-iden-sec}. Numerical examples
illustrating the proposed methods are provided in
\ref{nume-exam-sec}. Finally, Section \ref{conc-futu-sec} concludes
the paper.

{\bf Notations:}  Throughout the paper, the following notations are
adopted.  The space of $n$ by $m$ matrices with real entries is indicated by
$\mathbb{R}^{n\times m}$. The $n$ by $n$ identity matrix is denoted
$I_{n}$. Operators $\mathbf{Tr(.)}$ and $\mathbf{rank}(.)$ denote the
trace and rank of the matrix operands. The transpose and vectorization operators are denoted by $(.)^{\text T}$ and $\mathbf{vec}(.)$, respectively. The Hadamard product is represented by $\circ$. A matrix is said
to be Hurwitz if all its eigenvalues lie in the open left half of the
complex plane.  $\|.\|_0$ represents the cardinality of a
vector/matrix, while $\|.\|_1$ and $\|.\|_F$ denote $\ell_1$ and
Frobenius norm operators.Also, the norm
$\|.\|_{L^{q}_{\infty}(\mathbb{R}^n)}$ is defined by
\begin{align*}
\|x\|_{L^{q}_{\infty}(\mathbb{R}^n)}\triangleq \sup_{t\geq 0} \|x(t)\|_q
\end{align*}
A real symmetric matrix is said to be positive definite (semi-definite) if all its eigenvalues are positive (non-negative). $\mathbb{S}^n_{++}$ ($\mathbb{S}^n_{+}$) denotes the space of positive definite (positive semi-definite) real symmetric matrices, and the notation $X\succeq Y$ ($X\succ Y$) means $X-Y\in \mathbb{S}^n_+$ ($X-Y\in \mathbb{S}^n_{++}$).
%
%
%
%
\allowdisplaybreaks
\section{Problem Formulation \label{prob-formu-sec}}
\noindent Let a linear time invariant system be given by its state space realization
\begin{align} \label{eq:system}
\left\{\begin{array}{l}
\dot{x}(t)=Ax(t)+Bu(t)\\
y(t)=Cx(t)
\end{array}\right.
\end{align}
where $x(t)\in \mathbb{R}^n$ is the state vector, $y(t) \in
\mathbb{R}^p$ is the output of the system, $u(t)\in \mathbb{R}^m$ is
the control input, and matrices $A$, $B$ and $C$ have appropriate
dimensions. We consider designing a constant gain output
feedback stabilizing controller
\begin{align}\label{eq:controller}
u(t)=Ky(t), \:\:\: K\in \mathcal{K}
\end{align}
with the minimum number of non-zero entries that minimizes a
quadratic objective function. We further assume that the set of all acceptable \emph{a priori} specified structures for feedback gains, denoted by $\mathcal{K}$, is a convex set. The reason behind this call is that such an assumption not only reduces the complexity of the problem, but also convex constraints on controller constraints have broad real-world
applications. For example, there exist numerous applications in which establishing a link between two particular nodes is impractical either due to physical constraints or extremely high costs; such limitations can be incorporated into the design process by imposing the convex constraints that the corresponding entry of the controller gain should be zero. Also, other regularly occurring limitations such as upper bounds on the entries of the controller matrix can be also be implemented by convex constraints on matrix $K$.

In addition, we consider an upper bound on the norm of the control
input $u(t)$ and the closed loop system output $y(t)$. The search for
such a controller can be formulated as an optimization problem, in which
the sparsity of the feedback gain is incorporated by adding the
$\ell_0$-norm of the gain matrix to the objective function. The
$\ell_0$-norm denotes the cardinality of the feedback gain, hence, it
penalizes the number of non-zero entries of the matrix. Therefore, we
have the following optimization problem
\begin{align}
\min_{K,x,u}\:\: &J=\int_0^\infty [x(t)^{\text T}Qx(t)+u(t)^{\text T}Ru(t)]dt+\lambda\|K\|_0 \tag{{\bf P1}} \label{eq:P1} \\
\mbox{s.t.}\:\:\: &\dot{x}(t)=Ax(t)+Bu(t),\:\:\: x(0)=x_0 \notag \\
&u(t)=KCx(t),\:\:\: K\in\mathcal{K},\notag\\
&\|u\|_{L^{q}_{\infty}(\mathbb{R}^m)}\leq u_{\text max},\:\:\: \|y\|_{L^{q}_{\infty}(\mathbb{R}^p)}\leq y_{\text max},\notag
\end{align}
where $Q \in \mathbb{R}_{+}^{n}$ and $R \in \mathbb{R}_{++}^{m}$ are
performance weight matrices, $x_0$ is the initial state, and
$\lambda\in \mathbb{R}_+$ is the regularization parameter. Also, the value of $q$ in
the norm $\|.\|_{L^{q}_{\infty}(\mathbb{R}^n)}$ can be either infinity
or two. It is possible to rewrite our main optimization problem in the
following equivalent form \cite{Feron:1992}.
\begin{align}
\min_{X_{11},K}\:\:&\mathbf{Tr}[QX_{11}]+\mathbf{Tr}[RKCX_{11}C^{\text T}K^{\text T}]+\lambda\|K\|_0 \\
\mbox{s.t.}\:\:\:&(A+BKC)X_{11}+X_{11}(A+BKC)^{\text T}+x_0x_0^{\text T}=0, \notag\\
&\mbox{$(A+BKC)$ Hurwitz,}\notag\\
&K\in \mathcal{K},\notag\\
&\|KCx\|_{L^{q}_{\infty}(\mathbb{R}^m)}\leq u_{\text max},\:\:\: \|Cx\|_{L^{q}_{\infty}(\mathbb{R}^p)} \leq y_{\text max}.\notag
\end{align}
The feedback gain matrix $K$ derived from solving the above
optimization problem depends on the value of the initial state
$x_0$. To avoid re-solving the minimization problem for every value of
$x_0$, we design a state feedback controller which minimizes the
expected value of the cost function assuming that the entries of $x_0$
are independent Gaussian random variables with zero mean and
covariance matrix equal to the positive definite matrix $N$, i.e. $x_0\in \mathcal{N}(0,N)$. Using Lyapunov stability
theorem, it can be easily checked that the global asymptotic stability
of the closed loop system is guaranteed if and only if the matrix
$X_{11}$ is positive definite, thus we can rewrite the optimization
problem as follows
\begin{subequations}\label{eq:non_lin_SDP}
\begin{alignat}{1}
\min_{X_{11},X_{12},X_{22},K} &\mathbf{Tr}[QX_{11}]+\mathbf{Tr}[RX_{22}]+\lambda\|K\|_0 \label{eq:non_lin_SDP_a}\\
\mbox{s.t.}\:\:\: &AX_{11}+X_{11}A^{\text T}+BX_{12}^{\text T}+X_{12}B^{\text T}+N=0, \label{eq:non_lin_SDP_b}\\
&X_{11} \succ   0, \label{eq:non_lin_SDP_c}\\
&K\in \mathcal{K}, \label{eq:non_lin_SDP_d}\\
&X_{22}=(KC)X_{11}(KC)^{\text T},\:\:\:X_{12}^{\text T}=KCX_{11}, \label{eq:non_lin_SDP_e}\\
&\|KCx\|_{L^{q}_{\infty}(\mathbb{R}^m)} \leq u_{\text max},\:\:\: \|Cx\|_{L^{q}_{\infty}(\mathbb{R}^p)} \leq y_{\text max}, \label{eq:non_lin_SDP_f}
\end{alignat}
\end{subequations}
where $X_{11} \in \mathbb{R}^{n \times n}$, $X_{12} \in \mathbb{R}^{n
  \times m}$, and $X_{22} \in \mathbb{R}^{m \times m}$. In
optimization problem (\ref{eq:non_lin_SDP}), the constraints
(\ref{eq:non_lin_SDP_b}-\ref{eq:non_lin_SDP_d}) are convex,
nevertheless, the constraints (\ref{eq:non_lin_SDP_e}) are nonlinear
and the control input/output constraints (\ref{eq:non_lin_SDP_f}) are
in time domain, hence, the problem is non-convex.
%
%
%
%
\section{Rank Constrained Formulation \label{rank-cons-sec}}
In traditional $\text{LQR}$ problems, with no input/output constraints, the
nonlinear constraints can be replaced by a linear matrix inequality to
form an equivalent convex problem. However, the addition of the
sparsity penalizing term to the cost function, the existence of
structural constraints on the feedback gain matrix, and incorporation
of input/output bounds differentiate our problem from the conventional
$\text{LQR}$ problem, making the conventional approach inapplicable. Here, we
propose a controller synthesis approach based on the idea that the
non-convex constraints can be replaced by a rank constraint. Before proceeding, lets state the following lemma.
\begin{lemma} \label{lem:non_to_rank}
Let $U \in \mathbb{R}^{n \times n}$, $V \in \mathbb{R}^{n \times m}$, $W \in \mathbb{R}^{m \times m}$, and $Y \in \mathbb{R}^{m \times n}$, with  $U \succ 0$.
Then, $\mathbf{rank}(M)=n$ if and only if $W=YUY^{\text T}$ and $V^{\text T}=YU$, where
\begin{align*}
M=\left[\begin{array}{cc} U & V\\V^{\text T} & W\\I_{n}& Y^{\text T} \end{array}\right]
\end{align*}
\end{lemma}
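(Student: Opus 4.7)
The plan is to compute $\mathbf{rank}(M)$ directly by block row reduction. First note that $M$ has only $n+m$ columns and contains the block row $[I_n \,\,\, Y^{\text T}]$, which by itself has rank $n$; hence $\mathbf{rank}(M) \geq n$ always, and the equation $\mathbf{rank}(M)=n$ is equivalent to saying that the first two block rows of $M$ lie in the row span of the third. To make this precise, I would eliminate the $U$ and $V^{\text T}$ blocks in the first column using the third block row.

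Concretely, left-multiply $M$ by the invertible block matrix
\[
L \;=\; \begin{bmatrix} I_n & 0 & -U \\ 0 & I_m & -V^{\text T} \\ 0 & 0 & I_n \end{bmatrix},
\]
which produces
\[
LM \;=\; \begin{bmatrix} 0 & V-UY^{\text T} \\ 0 & W-V^{\text T}Y^{\text T} \\ I_n & Y^{\text T} \end{bmatrix}.
\]
Since $L$ is invertible, $\mathbf{rank}(LM)=\mathbf{rank}(M)$. The block structure of $LM$ makes its rank equal to $n$ plus the rank of the upper right block, so
\[
\mathbf{rank}(M) \;=\; n \;+\; \mathbf{rank}\!\begin{bmatrix} V-UY^{\text T} \\ W-V^{\text T}Y^{\text T} \end{bmatrix}.
\]
Therefore $\mathbf{rank}(M)=n$ if and only if $V = UY^{\text T}$ and $W = V^{\text T}Y^{\text T}$ simultaneously.

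It remains to convert these two equations into the form stated in the lemma. Using that $U \succ 0$ is symmetric, the first equation transposes to $V^{\text T} = YU^{\text T} = YU$, which is exactly the second claimed identity. Substituting $V^{\text T} = YU$ into $W = V^{\text T}Y^{\text T}$ gives $W = YUY^{\text T}$, which is the first claimed identity. The reverse implication is immediate: if $V^{\text T}=YU$ and $W=YUY^{\text T}$, then $V-UY^{\text T}=0$ and $W-V^{\text T}Y^{\text T}=0$, so $\mathbf{rank}(M)=n$.

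There is no real obstacle here; the only subtle point is recognising that the sole role of the hypothesis $U\succ 0$ is to guarantee $U=U^{\text T}$, so that $V=UY^{\text T}$ and $V^{\text T}=YU$ are equivalent statements. Invertibility of $U$, while also a consequence of $U\succ 0$, is not actually needed for this lemma.
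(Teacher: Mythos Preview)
Your proof is correct and takes a genuinely different route from the paper's. The paper pivots on the top-left block $U$: since $U\succ 0$ is invertible, it factors $M$ as
\[
M=\begin{bmatrix} I_n & 0\\ \begin{bmatrix}V^{\text T}\\ I_n\end{bmatrix}U^{-1} & I_{m+n}\end{bmatrix}
\begin{bmatrix} U & 0\\ 0 & \begin{bmatrix}W\\ Y^{\text T}\end{bmatrix}-\begin{bmatrix}V^{\text T}\\ I_n\end{bmatrix}U^{-1}V\end{bmatrix}
\begin{bmatrix} I_n & U^{-1}V\\ 0 & I_m\end{bmatrix},
\]
a Schur-complement decomposition, and reads off that $\mathbf{rank}(M)=n$ iff $W=V^{\text T}U^{-1}V$ and $Y^{\text T}=U^{-1}V$. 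You instead pivot on the $I_n$ sitting in the third block row and perform a single left row-reduction, which avoids $U^{-1}$ entirely. Your observation at the end is accurate and is the real payoff of your approach: the paper's argument genuinely needs $U$ invertible, whereas yours needs only $U=U^{\text T}$ to pass from $V=UY^{\text T}$ to $V^{\text T}=YU$. Both arguments are short, but yours is slightly sharper in isolating which part of the hypothesis $U\succ 0$ is actually doing work.
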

\begin{proof}
Since $\mathbf{rank}(U)=n$, its inverse exists and the matrix $M$ can be decomposed as
\begin{align*}
M=\left[\begin{array}{cc}
I_n& 0\\
\left[\begin{array}{c}
 V^{\text T} \\
 I_n
\end{array}\right]
 U^{-1} & I_{m+n}
\end{array}\right]
\bar{M}
\left[\begin{array}{cc}
I_n & U^{-1}V\\
0& I_m
\end{array}\right],
\end{align*}
where 
\begin{align*}
\bar{M}=\left[\begin{array}{cc}
U& 0\\
0& 
\left[\begin{array}{c}
 W \\
 Y^{\text T}\end{array}\right]-
 \left[\begin{array}{c}
 V^{\text T} \\
 I_n
\end{array}\right]U^{-1}V
\end{array}\right].
\end{align*}
Since the matrices pre/post-multiplied by the matrix $\bar{M}$ are full rank, the matrix $M$ is rank $n$ if and only if the rank of the matrix $\bar{M}$ is $n$, which is equivalent to
\begin{align*}
\left[\begin{array}{c}
 W \\
 Y^{\text T}\end{array}\right]-
 \left[\begin{array}{c}
 V^{\text T} \\
 I_n
\end{array}\right]U^{-1}V=0_{2n+m}.
\end{align*}
This completes the proof of the lemma.
\end{proof}
The following corollary is now immediate.
\begin{corollary}\label{cor:rank_sq_mat}
Assuming $X_{11}\succ 0$, the constraint
\begin{align*}
\mathbf{rank}\left[\begin{array}{ccc} X_{11}& X_{12}& I_{n}\\X_{12}^{\text T}&X_{22} &(KC)\\I_{n}& (KC)^{\text T} & Z \end{array}\right]=n
\end{align*}
is equivalent to 
\begin{align*}
\left\{\begin{array}{ll}
\mathbf{rank}\left[\begin{array}{cc} X_{11}& X_{12}\\X_{12}^{\text T}&X_{22}\\I_{n}& (KC)^{\text T} \end{array}\right]=n,\\
Z=X_{11}^{-1}
\end{array}
\right.
\end{align*}
\end{corollary}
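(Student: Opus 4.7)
The plan is to reduce the corollary directly to Lemma \ref{lem:non_to_rank} by recognizing that the augmented matrix in the corollary has exactly the same block structure as the matrix $M$ of the lemma, with the last block column $[I_n;\,(KC);\,Z]^{\text T}$ playing the role of an additional piece of the off-diagonal block. Concretely, I would partition the $(2n+m)\times(2n+m)$ matrix as
\begin{align*}
M'=\begin{bmatrix} X_{11} & \tilde V \\ \tilde V^{\text T} & \tilde W \end{bmatrix},\qquad
\tilde V=\bigl[\,X_{12}\ \ I_n\,\bigr],\qquad
\tilde W=\begin{bmatrix} X_{22} & KC \\ (KC)^{\text T} & Z \end{bmatrix}.
\end{align*}
Since $X_{11}\succ 0$ is invertible, the same block-LDU factorization used in the proof of Lemma \ref{lem:non_to_rank} applies and yields $\mathbf{rank}(M')=n+\mathbf{rank}\!\left(\tilde W-\tilde V^{\text T}X_{11}^{-1}\tilde V\right)$.

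Next I would expand the Schur-complement condition $\tilde W=\tilde V^{\text T}X_{11}^{-1}\tilde V$ block by block, obtaining the three equations
\begin{align*}
X_{22}=X_{12}^{\text T}X_{11}^{-1}X_{12},\qquad KC=X_{12}^{\text T}X_{11}^{-1},\qquad Z=X_{11}^{-1}.
\end{align*}
The middle equation is precisely (the transpose of) Lemma \ref{lem:non_to_rank}'s requirement $X_{12}^{\text T}=(KC)X_{11}$, and once this holds the first equation becomes $X_{22}=(KC)X_{11}(KC)^{\text T}$, which is the other conclusion of Lemma \ref{lem:non_to_rank} applied to the smaller matrix displayed in the corollary. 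Therefore $\mathbf{rank}(M')=n$ is equivalent to the pair of conditions that (i) the smaller matrix has rank $n$ (equivalently the two nonlinear relations of Lemma \ref{lem:non_to_rank} hold), and (ii) $Z=X_{11}^{-1}$.

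For the converse direction I would just check that if the smaller matrix has rank $n$ and $Z=X_{11}^{-1}$, then the three block equations above hold, so $\tilde W-\tilde V^{\text T}X_{11}^{-1}\tilde V=0$ and $\mathbf{rank}(M')=\mathbf{rank}(X_{11})=n$. There is no real obstacle here; the only care needed is bookkeeping to ensure that the $(1,1)$ block equation $X_{22}=X_{12}^{\text T}X_{11}^{-1}X_{12}$ really is redundant given the off-diagonal identity $(KC)X_{11}=X_{12}^{\text T}$, which follows by a direct substitution. The argument is essentially a one-line reuse of Lemma \ref{lem:non_to_rank} once the correct block partition is chosen, and I would present it as such rather than redoing the full LDU computation.
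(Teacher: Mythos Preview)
Your proposal is correct and matches the paper's intended argument: the paper labels the corollary as ``immediate'' from Lemma~\ref{lem:non_to_rank} and gives no separate proof, and what you have written is precisely the natural way to make that immediacy explicit---take the Schur complement of the $(2n+m)\times(2n+m)$ matrix with respect to the invertible $X_{11}$ block, read off the three block equations, and recognize two of them as Lemma~\ref{lem:non_to_rank} applied to the smaller $(2n+m)\times(n+m)$ matrix while the third is $Z=X_{11}^{-1}$. Your bookkeeping on the redundancy of the $X_{22}$ equation is also correct.
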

For legibility purposes, we first develop the equivalent formulation for
the case with no constraint imposed on the control inputs/outputs,
then, we incorporate the bounds on the input/output of the closed loop
system. 
\subsection{Rank Constraint Formulation with no Input/Output Constraint \label{rank-cons-sec_a}}
Assuming that no upper bound is defined for the input/output of the controlled system, the next proposition states that the nonlinear Semidefinite Program (\ref{eq:non_lin_SDP}) can be cast as an optimization problem, where all constraints are convex except one, which is a rank constraint.
\begin{proposition}
The optimization program (\ref{eq:non_lin_SDP_a}-\ref{eq:non_lin_SDP_e}) is equivalent to the following rank constrained problem
\begin{align}\label{eq:rank_const_SDP}
\min_{X_{11},X_{12},X_{22}, K} &\mathbf{Tr}[QX_{11}]+\mathbf{Tr}[RX_{22}]+\lambda\|K\|_0  \tag{{\bf P2}}\\
\mbox{s.t.}\:\:\: &AX_{11}+X_{11}A^{\text T}+BX_{12}^{\text T}+X_{12}B^{\text T}+N=0, \notag\\
&X_{11} \succ 0,\notag\\
&K \in \mathcal{K},\notag\\
&\mathbf{rank}(X)=n,\notag
\end{align}
where
\begin{align*}
X=\left[\begin{array}{ccc} X_{11}& X_{12}& I_{n}\\X_{12}^{\text T}&X_{22} &(KC)\\I_{n}& (KC)^{\text T} & Z \end{array}\right].
\end{align*}
\end{proposition}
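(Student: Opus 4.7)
The plan is to reduce this proposition to a direct application of Corollary \ref{cor:rank_sq_mat} together with Lemma \ref{lem:non_to_rank}. The two problems share the same objective function \eqref{eq:non_lin_SDP_a} and the common constraints \eqref{eq:non_lin_SDP_b}, \eqref{eq:non_lin_SDP_c}, \eqref{eq:non_lin_SDP_d}. Therefore the entire argument boils down to showing that, under $X_{11} \succ 0$, the nonlinear equality constraints \eqref{eq:non_lin_SDP_e}, namely $X_{22}=(KC)X_{11}(KC)^{\text T}$ and $X_{12}^{\text T}=KCX_{11}$, are equivalent to the single rank constraint $\mathbf{rank}(X)=n$ appearing in \eqref{eq:rank_const_SDP}.

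First, I would invoke Corollary \ref{cor:rank_sq_mat} to split the $(2n+m) \times (2n+m)$ rank condition on $X$ into two separate requirements: the rank of the $(2n+m) \times (n+m)$ rectangular block matrix being equal to $n$, and the algebraic identity $Z = X_{11}^{-1}$. Since $Z$ does not enter the cost function, any other convex constraint, or the dynamical constraint, it is a free auxiliary variable; given any feasible tuple $(X_{11}, X_{12}, X_{22}, K)$ of \eqref{eq:non_lin_SDP_a}--\eqref{eq:non_lin_SDP_e}, one can simply set $Z = X_{11}^{-1}$, which is well-defined because $X_{11} \succ 0$. Conversely, a feasible solution of \eqref{eq:rank_const_SDP} yields the required $Z$ automatically and it can be discarded when recovering a solution of \eqref{eq:non_lin_SDP_a}--\eqref{eq:non_lin_SDP_e}.

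Next, I would apply Lemma \ref{lem:non_to_rank} directly with the substitution $U = X_{11}$, $V = X_{12}$, $W = X_{22}$, and $Y = KC$. Since $X_{11} \succ 0$ guarantees $U \succ 0$, the lemma tells us that the rectangular matrix has rank $n$ if and only if $W = Y U Y^{\text T}$ and $V^{\text T} = Y U$, which read exactly as the nonlinear equalities \eqref{eq:non_lin_SDP_e}. Chaining the two equivalences produces the claimed one-to-one correspondence between feasible sets and matching objective values, establishing equivalence of the two programs.

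I do not expect any genuine technical obstacle: all the nontrivial work is already carried out by the lemma and its corollary. The only subtlety worth stating carefully in the write-up is the role of the slack variable $Z$, to make it explicit that introducing and eliminating $Z$ preserves both feasibility and optimality, so that the equivalence is not just between feasible sets but between the two optimization problems as a whole.
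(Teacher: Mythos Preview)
Your proposal is correct and follows essentially the same approach as the paper: both arguments reduce the equivalence to Lemma~\ref{lem:non_to_rank} and Corollary~\ref{cor:rank_sq_mat}, with $X_{11}\succ 0$ supplying the needed full-rank top-left block. The only cosmetic difference is the order---the paper first applies the lemma to replace \eqref{eq:non_lin_SDP_e} by the rectangular rank condition and then invokes the corollary to introduce $Z$ and symmetrize, whereas you start from $\mathbf{rank}(X)=n$ and work backwards; your explicit remark on $Z$ being a free auxiliary variable is a welcome clarification.
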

\begin{proof}
Applying Lemma \ref{lem:non_to_rank} to the constraints $X_{22}=(KC)X_{11}(KC)^{\text T}$ and $X_{12}^{\text T}=(KC)X_{11}$, they can be equivalently replaced by the rank constraint
\begin{align*}
\mathbf{rank}\left[\begin{array} {cc} X_{11}& X_{12}\\X_{12}^{\text T}&X_{22}\\I_{n}& (KC)^{\text T} \end{array}\right]=n,
\end{align*}
since $X_{11}$ is constrained to be positive definite. Introducing the auxiliary matrix variable $Z$, we can employ Corollary \ref{cor:rank_sq_mat} to rewrite the above rank constraint as a rank constraint on a symmetric matrix, i.e. $\mathbf{rank}(X)=n$.
\end{proof}
It should be noted that augmenting the matrix $\left[\begin{smallmatrix}  I_{n}& KC & Z \end{smallmatrix}\right]^{\text T}$ to the original rank constrained matrix only adds some redundant constraints along with an extra variable. Although we increase the number of variables by introducing the new \emph{n-by-n} variable $Z$, having a symmetric rank constrained matrix has proved to be helpful, as we aim to use a positive semidefinite relaxation of the rank constraint later in this paper, thus, it is crucial to associate the rank constraint to a symmetric matrix.
\begin{remark}\label{rem:inverse_x}
The optimal value of $Z$ in problem (\ref{eq:rank_const_SDP}) is the inverse of the optimal $X_{11}$, i. e. $Z^*={X_{11}^{*}}^{-1}$.
\end{remark}
\subsection{Feasibility of the output feedback control problem}
Before proceeding with the addition of the input/output constraints to the problem formulation, we discuss how our proposed rank constrained reformulation can be exploited in investigating the feasibility of the output feedback control problem under constraints such controller pre-defined structure and input/output constraint. We further utilize the proposed formulation to obtain the upper/lower bounds for the optimal cost of the optimal sparse output feedback control problem. Although the results in this section are stated for the case where no input/output bound is enforced on the controller, they can be effortlessly extended to incorporate such constraints. The next theorem introduces a feasibility test for the existence of a stabilizing output feedback controller with predefined structure.
\begin{theorem}
The linear time invariant system (\ref{eq:system}) can be stabilized using the output feedback controller described in (\ref{eq:controller}) if and only if the optimal cost of the following optimization problem is equal to zero.
\begin{align}\label{eq:feasibility}
\min_{X,Y}\:\:\:&\mathbf{Tr}(Y^{\text T} X)\\
\mbox{s.t.}\:\:\: &AX_{11}+X_{11}A^{\text T}+BX_{12}^{\text T}+X_{12}B^{\text T}\prec 0, \notag\\
&X_{11} \succ 0,\notag\\
&K \in \mathcal{K},\notag\\
&X\succeq 0,\notag\\
&0\preceq Y\preceq I_{2n+m},\notag\\
& \mathbf{Tr}(Y)=n+m,\notag
\end{align}
where
\begin{align*}
X=\left[\begin{array}{ccc} X_{11}& X_{12}& I_{n}\\X_{12}^{\text T}&X_{22} &(KC)\\I_{n}& (KC)^{\text T} & Z \end{array}\right].
\end{align*}
\end{theorem}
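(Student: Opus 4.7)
The plan is to reduce the feasibility test to the rank characterization already supplied by Lemma~\ref{lem:non_to_rank} and Corollary~\ref{cor:rank_sq_mat}. The key observation is that the variable $Y$, constrained by $0 \preceq Y \preceq I_{2n+m}$ and $\mathbf{Tr}(Y) = n+m$, is a convex surrogate for picking out the $n+m$ smallest eigenvalues of $X$. A standard Ky Fan argument gives, for any $X \succeq 0$ with eigenvalues $\lambda_1(X) \geq \cdots \geq \lambda_{2n+m}(X) \geq 0$,
\begin{equation*}
\min_{0 \preceq Y \preceq I_{2n+m},\; \mathbf{Tr}(Y) = n+m} \mathbf{Tr}(Y^{\text T} X) \;=\; \sum_{i=n+1}^{2n+m} \lambda_i(X),
\end{equation*}
the minimizer being the orthogonal projector onto the eigenspace associated with the $n+m$ smallest eigenvalues of $X$.

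The right-hand side is nonnegative and vanishes exactly when $\mathbf{rank}(X) \leq n$. Because $X_{11} \succ 0$ appears as a leading principal submatrix of $X$, we always have $\mathbf{rank}(X) \geq n$, so the optimum equals zero if and only if $\mathbf{rank}(X) = n$. By Corollary~\ref{cor:rank_sq_mat} this is equivalent to the nonlinear relations $X_{22} = (KC)X_{11}(KC)^{\text T}$, $X_{12}^{\text T} = (KC)X_{11}$, and $Z = X_{11}^{-1}$ holding jointly. Substituting $X_{12} = X_{11}(KC)^{\text T}$ into the strict Lyapunov inequality collapses it to $(A+BKC)X_{11} + X_{11}(A+BKC)^{\text T} \prec 0$, which together with $X_{11} \succ 0$ and $K \in \mathcal{K}$ is precisely the classical Lyapunov characterization that $A+BKC$ is Hurwitz with $K$ in the admissible structure set.

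Conversely, any $K \in \mathcal{K}$ that stabilizes (\ref{eq:system}) admits, by Hurwitz-ness of $A+BKC$, a positive definite solution $X_{11}$ to $(A+BKC)X_{11} + X_{11}(A+BKC)^{\text T} + N_0 = 0$ for any $N_0 \succ 0$; defining $X_{12} := X_{11}(KC)^{\text T}$, $X_{22} := (KC)X_{11}(KC)^{\text T}$, and $Z := X_{11}^{-1}$ places the tuple in the feasible set of (\ref{eq:feasibility}), and the induced $X$ has $\mathbf{rank}(X) = n$ by Corollary~\ref{cor:rank_sq_mat}, so the optimum is attained at zero.

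The only nontrivial ingredient is the Ky Fan identity linking the inner program in $Y$ to the rank; every other step is a direct appeal to Lemma~\ref{lem:non_to_rank} and Corollary~\ref{cor:rank_sq_mat} or to a standard Lyapunov argument. A small technical point worth flagging is that the feasibility formulation here uses a strict Lyapunov inequality rather than the covariance-carrying equality (\ref{eq:non_lin_SDP_b}); this is appropriate because the test asks only whether a quadratic Lyapunov function exists, independently of any initial-state covariance data.
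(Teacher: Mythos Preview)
Your argument is correct and follows essentially the same route as the paper: both hinge on the Ky Fan/Dattorro identity expressing $\min_{Y}\mathbf{Tr}(Y^{\text T}X)$ (over $0\preceq Y\preceq I$, $\mathbf{Tr}(Y)=n+m$) as the sum of the $n+m$ smallest eigenvalues of $X\succeq 0$, and then translate vanishing of that sum into $\mathbf{rank}(X)=n$ via the rank-constraint formulation. Your write-up is in fact more explicit than the paper's, spelling out the appeal to Corollary~\ref{cor:rank_sq_mat}, the reduction of the Lyapunov inequality to Hurwitz-ness of $A+BKC$, and the converse construction.
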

\begin{proof}
Applying the results from \cite[p.266]{Dattoro:2005}, if the  matrix $X$ is positive semidefinite, i.e. $X\in\mathbb{S}_+^{2n+m}$, we have
\begin{align*}
\sum_{i=n+1}^{2n+m} \lambda_i(X)=\min_{Y\in \mathbb{R}^{2n+m}}\:\:\:&\mathbf{Tr}(Y^{\text T} X)\\
\mbox{s.t.}\:\:\: &0\preceq Y\preceq I_{2n+m},\notag\\
& \mathbf{Tr}(Y)=(2n+m)-n,\notag
\end{align*}
where $\lambda_1(X)\geq \cdots \geq \lambda_{2n+m}(X)$ are the eigenvalues of $X$. Due to positive semidefiniteness of $X$, the optimal cost of (\ref{eq:feasibility}) is lower bounded by zero. Now, using our rank constraint formulation, it can be verified that such an output feedback controller, satisfying the predefined structure, stabilizes the LTI system (\ref{eq:system}) if and only if the feasible set of (\ref{eq:feasibility}) contains at least a matrix $X$ with rank $n$ for which the sum of  $n+m$ smaller eigenvalues is equal to zero, i.e. $\sum_{i=n+1}^{2n+m} \lambda_i(X)=0$. 
\end{proof}
The optimization problem (\ref{eq:feasibility}) is non-convex due to the existence of the the bi-linear term in its cost function. However, it can be solved utilizing a globally convergent optimization algorithm, which iteratively solves the problem for $X$ and $Y$ till it reaches the convergence \cite{Delgado:2014, Gorski:2007}.  

Next, we investigate the bounds on the optimal cost of the optimization problem \ref{eq:rank_const_SDP}. Assuming feasibility, the lower bound for the optimal cost can be evidently achieved by relaxing the rank constraint $\mathbf{rank}(X)$ by the positive semidefinite constraint $X\succeq0$, since the PSD constraint defines a super-set for the set determined by the rank constraint. As a result, the feasible set of the rank constraint optimization \ref{eq:rank_const_SDP} is a subset of the feasible set of the relaxed problem, hence, the optimal cost of the relaxed optimization provides us with a lower bound for original problem. A more detailed discussion is provided in Section \ref{ADMM-sec}.

As for the upper bound, the results of the theorem \ref{thm:out_up_bound} can be utilized to obtain such a bound if either there is no pre-defined structure on the controller gain or the set of the acceptable controller structures, i.e. $\mathcal{K}$ is assumed to be invariant with respect to positive scaling. This assumption covers the highly applicatory structural constraint, where the feasibility/infeasibility of feedback paths are \emph{a priori} specified generally through a directed graph representation. In such cases the feedback link can be established only if its corresponding edge of the graph, i.e. the pair $(\mathcal{V},\mathcal{E})$ of vertices and edges respectively, is existent, as shown in equation (\ref{eq:pattern_struc}).
\begin{align}\label{eq:pattern_struc}
\mathcal{K}=\{K\:|\: K_{ij}=0\:\:\mbox{if}\:\: (v_i,v_j)\notin\mathcal{E}\}
\end{align}

\begin{theorem}\label{thm:out_up_bound}
Assuming the set $\mathcal{K}$ is invariant under positive scaling, the optimal cost of the following optimization problem provides an upper bound for the solution of the rank constrained problem (\ref{eq:rank_const_SDP}).
\begin{align}  \label{eq:out_up_bound}
\min_{{X}}\:\: &\mathbf{Tr}[RX_{22}]+\mathbf{Tr}[QX_{11}]+\lambda\|\tilde{K}\|_0  \\
\mbox{s.t.}\:\:\: &AX_{11}+X_{11}A^{\text T}+BX_{12}^{\text T}+X_{12}B^{\text T}+N=0, \notag\\
&X_{11} \succ 0,\:\:\: \tilde{K}\in \mathcal{K},\:\:\: \alpha>0,\notag\\
&{X}\succeq 0,\notag
\end{align}
where
\begin{align*}
{X}=\left[\begin{array}{ccc} X_{11}& X_{12}& \alpha I_n\\X_{12}^{\text T}&X_{22} &(\tilde{K}C)\\ \alpha I_n& (\tilde{K}C)^{\text T} & 2\alpha I_n - X_{11} \end{array}\right].
\end{align*}
\end{theorem}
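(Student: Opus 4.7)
The plan is to exhibit, for an arbitrary feasible point of (\ref{eq:out_up_bound}), a feasible point of (\ref{eq:rank_const_SDP}) whose objective value is no larger; this immediately gives the claimed inequality between the two optimal values. The key algebraic observation driving the construction is that the specific placement of the $\alpha I_n$ and $2\alpha I_n - X_{11}$ blocks inside $X$ makes the PSD condition almost as rigid as the rank-$n$ condition of Corollary \ref{cor:rank_sq_mat}, relaxing only the quadratic equality on $X_{22}$ to an inequality.

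First I would extract the algebraic content of $X \succeq 0$. Testing the associated quadratic form on the family of vectors $(v,0,-v) \in \mathbb{R}^{2n+m}$ gives
\begin{align*}
\begin{bmatrix} v \\ 0 \\ -v \end{bmatrix}^{\text T} X \begin{bmatrix} v \\ 0 \\ -v \end{bmatrix} = v^{\text T} X_{11} v - 2\alpha v^{\text T} v + v^{\text T}(2\alpha I_n - X_{11}) v = 0
\end{align*}
for every $v \in \mathbb{R}^n$. Since $X$ is PSD, every such vector must lie in $\ker X$, and reading off the three block rows of the identity $X(v,0,-v)^{\text T} = 0$ forces $X_{11} = \alpha I_n$ and $X_{12}^{\text T} = \tilde{K}C$. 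The leading $(n+m) \times (n+m)$ principal submatrix of $X$ is PSD as well, so with these substitutions a Schur complement against the $\alpha I_n$ block yields $X_{22} \succeq \tfrac{1}{\alpha}\tilde{K}CC^{\text T}\tilde{K}^{\text T}$, which is exactly a relaxation of the rank-constrained equality $X_{22} = (KC)X_{11}(KC)^{\text T}$ once we pick $K := \tilde{K}/\alpha$.

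Next I would convert the feasible point $(\tilde{K}, X_{11}, X_{12}, X_{22}, \alpha)$ into a feasible tuple of (\ref{eq:rank_const_SDP}) by setting $K := \tilde{K}/\alpha$, $X_{22}' := (KC) X_{11} (KC)^{\text T} = \tfrac{1}{\alpha} \tilde{K}CC^{\text T}\tilde{K}^{\text T}$, and $Z := X_{11}^{-1} = \alpha^{-1} I_n$. Positive-scaling invariance of $\mathcal{K}$ gives $K \in \mathcal{K}$; substituting $X_{11} = \alpha I_n$ and $X_{12}^{\text T} = \alpha KC$ in the Lyapunov constraint of (\ref{eq:out_up_bound}) reproduces the Lyapunov constraint of (\ref{eq:rank_const_SDP}) for this $K$ with the same $X_{11}, X_{12}$; and Corollary \ref{cor:rank_sq_mat} applied to $(X_{11}, X_{12}, X_{22}', KC, Z)$ certifies $\mathbf{rank}(X) = n$. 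For the cost comparison, cardinality is scale-invariant so $\|K\|_0 = \|\tilde{K}\|_0$, the term $\mathbf{Tr}[QX_{11}]$ is unchanged, and $X_{22} \succeq X_{22}'$ combined with $R \succ 0$ gives $\mathbf{Tr}[RX_{22}'] \le \mathbf{Tr}[RX_{22}]$, which completes the argument.

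I expect the structural first step to be the main obstacle: once the kernel identity forcing $X_{11}=\alpha I_n$ and $X_{12}^{\text T}=\tilde{K}C$ is in hand, everything else is routine bookkeeping, but without these two identities the rescaling $\tilde{K}\mapsto\tilde{K}/\alpha$ would not reproduce the rank-constrained Lyapunov equation exactly, and the positive-scaling invariance assumption on $\mathcal{K}$ would not play its role.
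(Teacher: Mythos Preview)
Your proof is correct, and it follows a genuinely different route from the paper's intended argument. The paper (by explicit reference to the proof of Theorem~\ref{thm:stat_up_bound}) proceeds in two stages: it first introduces an intermediate problem with the exact bottom-right block $\alpha^2 X_{11}^{-1}$, shows via a Schur-complement computation that this intermediate problem has the \emph{same} optimal value as \ref{eq:rank_const_SDP}, and then invokes the scalar matrix inequality $\alpha^2 X_{11}^{-1}\succeq 2\alpha I_n - X_{11}$ to conclude that the feasible set of \eqref{eq:out_up_bound} is contained in that of the intermediate problem. Your argument bypasses the intermediate problem entirely: the kernel trick on the test vectors $(v,0,-v)$ exploits the specific affine structure of the $2\alpha I_n - X_{11}$ block to force $X_{11}=\alpha I_n$ and $X_{12}^{\text T}=\tilde{K}C$ directly, after which the construction of a feasible point of \ref{eq:rank_const_SDP} is immediate.

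What your approach buys is directness and a sharper structural insight---you show that feasibility in \eqref{eq:out_up_bound} actually pins $X_{11}$ to a scalar multiple of the identity, which is not visible from the paper's subset-of-feasible-sets argument. What the paper's approach buys is the auxiliary equivalence between the intermediate problem (with the $\alpha^2 X_{11}^{-1}$ block) and \ref{eq:rank_const_SDP}, which is of independent interest and explains conceptually why the $2\alpha I_n - X_{11}$ relaxation is the ``right'' linearization. Both arguments hinge on the positive-scaling invariance of $\mathcal{K}$ and the scale invariance of $\|\cdot\|_0$ in exactly the same place.
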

\begin{proof}
The proof is similar to that of the theorem \ref{thm:stat_up_bound}; hence, omitted.
\end{proof}
Next, we state another version of the previous theorem which is valid when the state feedback controller design is intended. Due to the simpler structure of this problem, a tighter upper bound can be achieved by employing the following theorem.
\begin{theorem}\label{thm:stat_up_bound}
Assuming the set $\mathcal{K}$ is invariant under positive scaling, the optimal cost of the following optimization problem provides an upper bound for the solution of the rank constrained problem (\ref{eq:rank_const_SDP}) in the case of feedback controller synthesis, i.e. $C=I_n$.
\begin{align}  \label{eq:stat_up_bound}
\min_{{X}}\:\: &\mathbf{Tr}[RX_{22}]+\mathbf{Tr}[QX_{11}]+\lambda\|\tilde{K}\|_0  \\
\mbox{s.t.}\:\:\: &AX_{11}+X_{11}A^{\text T}+BX_{12}^{\text T}+X_{12}B^{\text T}+N=0, \notag\\
&X_{11} \succ 0,\:\:\: \tilde{K}\in \mathcal{K},\notag\\
&\Gamma=\mathbf{diag}(\alpha_1,\cdots,\alpha_n)\succ 0,\notag\\
&{X}\succeq 0,\notag
\end{align}
where
\begin{align*}
{X}=\left[\begin{array}{ccc} X_{11}& X_{12}& \Gamma\\X_{12}^{\text T}&X_{22} &\tilde{K}\\ \Gamma& \tilde{K}^{\text T} & 2\Gamma - X_{11} \end{array}\right].
\end{align*}
\end{theorem}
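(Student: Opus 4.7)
The plan is to show that every feasible point of (\ref{eq:stat_up_bound}) induces a feasible point of the rank constrained problem (\ref{eq:rank_const_SDP}) (with $C=I_n$) of no larger objective value, which immediately yields the claimed upper bound on the optimum of (\ref{eq:rank_const_SDP}). The central observation will be that, although (\ref{eq:stat_up_bound}) only imposes $X\succeq 0$ rather than $\mathbf{rank}(X)=n$, the special placement of the matrices $\Gamma$ and $2\Gamma-X_{11}$ in the third block row/column forces the relaxation to be tight in a very specific way: it pins down $X_{11}=\Gamma$ and $X_{12}^{\text T}=\tilde K$. Once this is established, setting $K:=\tilde K\Gamma^{-1}$ produces the required feasible point for (\ref{eq:rank_const_SDP}).

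The first technical step is a Schur complement computation. Since $X_{11}\succ 0$, $X\succeq 0$ is equivalent to
\begin{equation*}
\begin{bmatrix} X_{22}-X_{12}^{\text T}X_{11}^{-1}X_{12} & \tilde K-X_{12}^{\text T}X_{11}^{-1}\Gamma \\ \tilde K^{\text T}-\Gamma X_{11}^{-1}X_{12} & 2\Gamma-X_{11}-\Gamma X_{11}^{-1}\Gamma\end{bmatrix}\succeq 0 .
\end{equation*}
Next I would use the algebraic identity
\begin{equation*}
X_{11}+\Gamma X_{11}^{-1}\Gamma -2\Gamma=(X_{11}-\Gamma)X_{11}^{-1}(X_{11}-\Gamma)=\bigl(X_{11}^{-1/2}(X_{11}-\Gamma)\bigr)^{\text T}\bigl(X_{11}^{-1/2}(X_{11}-\Gamma)\bigr)\succeq 0,
\end{equation*}
which combined with the $(2,2)$-block inequality above forces $(X_{11}-\Gamma)X_{11}^{-1}(X_{11}-\Gamma)=0$, hence $X_{11}=\Gamma$. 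Plugging this back into the Schur complement makes the $(2,2)$-block identically zero, and a PSD block matrix with a vanishing diagonal block must have vanishing off-diagonals, yielding $X_{12}^{\text T}=\tilde K$. The remaining $(1,1)$-block inequality then reads $X_{22}\succeq \tilde K\Gamma^{-1}\tilde K^{\text T}$.

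Having pinned down the structure, the second step is the construction. Define $K:=\tilde K\Gamma^{-1}$ and $X_{22}^{\mathrm{new}}:=KX_{11}K^{\text T}=\tilde K\Gamma^{-1}\tilde K^{\text T}$, and keep $X_{11}$ and $X_{12}$ as above. I would verify: (i) the nonlinear identities $X_{12}^{\text T}=KX_{11}$ and $X_{22}^{\mathrm{new}}=KX_{11}K^{\text T}$ hold, so by Lemma \ref{lem:non_to_rank} (via Corollary \ref{cor:rank_sq_mat}) the rank constraint is met; (ii) the Lyapunov equation is preserved because $BX_{12}^{\text T}=B\tilde K=BKX_{11}$; (iii) $X_{11}=\Gamma\succ 0$; (iv) $K\in \mathcal{K}$ because $\tilde K\mapsto\tilde K\Gamma^{-1}$ is a positive column scaling and $\mathcal{K}$ is invariant under positive scaling (in particular, for pattern-defined $\mathcal{K}$ as in (\ref{eq:pattern_struc}), column scaling by a positive diagonal preserves the zero pattern). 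Comparing costs: $\mathbf{Tr}[QX_{11}]$ is identical; $\|K\|_0=\|\tilde K\|_0$ since positive column scaling preserves sparsity; and $X_{22}\succeq X_{22}^{\mathrm{new}}$ together with $R\succ 0$ gives $\mathbf{Tr}[RX_{22}^{\mathrm{new}}]\le \mathbf{Tr}[RX_{22}]$. Therefore the constructed point is feasible for (\ref{eq:rank_const_SDP}) with objective no larger than that at the chosen feasible point of (\ref{eq:stat_up_bound}), proving the upper bound.

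The main obstacle is the first technical step: it is not at all obvious that the seemingly mild PSD relaxation $X\succeq 0$ is in fact tight enough to force $X_{11}=\Gamma$. The trick lies in recognizing the squared-form identity above, which supplies the reverse inequality needed to collapse the relaxation. A secondary subtlety is making the invariance hypothesis on $\mathcal{K}$ do the right work — namely, that positive diagonal column scaling (rather than only scalar scaling) preserves $\mathcal{K}$; this is automatic for the structural sets most relevant to the paper but worth flagging.
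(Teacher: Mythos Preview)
Your proof is correct, and it takes a somewhat different route from the paper's. The paper introduces an intermediate optimization problem, call it $(\star)$, identical to (\ref{eq:stat_up_bound}) except that the $(3,3)$ block of $X$ is $\Gamma X_{11}^{-1}\Gamma$ rather than $2\Gamma-X_{11}$. It first argues that $(\star)$ is \emph{equivalent} to (\ref{eq:rank_const_SDP}) with $C=I_n$ (by scaling the last block row/column of the rank-constrained matrix by $\Gamma$, substituting $\tilde K=K\Gamma$, and then showing that the optimal slack $M=X_{22}-X_{12}^{\text T}X_{11}^{-1}X_{12}$ must vanish). It then uses the matrix inequality $\Gamma X_{11}^{-1}\Gamma\succeq 2\Gamma-X_{11}$ to conclude that the feasible set of (\ref{eq:stat_up_bound}) is contained in that of $(\star)$, whence the upper bound.

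You bypass the intermediate problem entirely and extract a sharper structural fact: feasibility of (\ref{eq:stat_up_bound}) \emph{forces} $X_{11}=\Gamma$. Both arguments rest on the same inequality, but you exploit its equality case via the identity $X_{11}+\Gamma X_{11}^{-1}\Gamma-2\Gamma=(X_{11}-\Gamma)X_{11}^{-1}(X_{11}-\Gamma)$, which makes the collapse transparent and yields the feasible point of (\ref{eq:rank_const_SDP}) directly. The paper's detour has the mild advantage of exhibiting $(\star)$ as an exact (nonconvex) reformulation of (\ref{eq:rank_const_SDP}), which is of independent interest; your approach is shorter and makes explicit the rather strong consequence that $X_{11}$ must be diagonal at every feasible point of (\ref{eq:stat_up_bound}). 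Your closing remark about needing invariance of $\mathcal{K}$ under positive \emph{diagonal} column scaling (not merely scalar scaling) is well taken and applies equally to the paper's argument.
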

\begin{proof}
First, we show that the following optimization problem solves the optimal state feedback control problem, i.e. (\ref{eq:rank_const_SDP}) with $C=I_n$.
\begin{align}  \label{eq:inverse_const_SDP}
\min_{{X}}\:\: &\mathbf{Tr}[RX_{22}]+\mathbf{Tr}[QX_{11}]+\lambda\|\tilde{K}\|_0  \\
\mbox{s.t.}\:\:\: &AX_{11}+X_{11}A^{\text T}+BX_{12}^{\text T}+X_{12}B^{\text T}+N=0, \notag\\
&X_{11} \succ 0,\:\:\: \tilde{K}\in \mathcal{K},\notag\\
&\Gamma=\mathbf{diag}(\alpha_1,\cdots,\alpha_n)\succ 0,\notag\\
&{X}\succeq 0,\notag
\end{align}
where
\begin{align*}
{X}=\left[\begin{array}{ccc} X_{11}& X_{12}&  \Gamma\\X_{12}^{\text T}&X_{22} &\tilde{K}\\ \Gamma& \tilde{K}^{\text T} & \Gamma X_{11}^{-1} \Gamma \end{array}\right].
\end{align*}
Scaling the last block-row and column of the matrix $X$ in (\ref{eq:rank_const_SDP}), assuming the scaler is not zero, does not affect the rank constraint, thus we can equivalently rewrite it as
\begin{align*}
\mathbf{rank}\left[\begin{array}{ccc} X_{11}& X_{12}& \Gamma \\X_{12}^{\text T}&X_{22} &K \Gamma\\ \Gamma& \Gamma {K}^{\text T} & \Gamma X_{11}^{-1}\Gamma \end{array}\right]=n
\end{align*}
The $\ell_0$-norm is invariant under positive scaling, hence $\|{K\Gamma}\|_0=\|K\|_0$. Also, Since the set $\mathcal{K}$ is assumed to be invariant under positive scaling, the constraint ${K}\in \mathcal{K}$ is identical to the matrix $ K\Gamma$ belonging to the set of admissible controller structures. Therefore, it is possible to rewrite the optimization problem in terms of the new variable, defined as $\tilde{K}= K \Gamma$.

In optimization problem (\ref{eq:inverse_const_SDP}), due to the positive definiteness of $X_{11}$, the constraint ${X}\succeq 0$ is equivalent to positive definiteness of its Schur complement, that is 
\begin{align*}
&\left[\begin{array}{cc} X_{22} &\tilde{K}\\ \tilde{K}^{\text T} & \Gamma X_{11}^{-1}\Gamma \end{array}\right]-\left[\begin{array}{cc}
 X_{12}^{\text T}  \\
 \Gamma
\end{array}\right]X_{11}^{-1}\left[\begin{array}{cc}
X_{12} & \Gamma 
\end{array}\right]\succeq 0\\
\Rightarrow 
&\left[\begin{array}{cc} X_{22}-X_{12}^{\text T}X_{11}^{-1}X_{12} &\tilde{K}-X_{12}^{\text T}X_{11}^{-1}\Gamma\\ \tilde{K}^{\text T}-\Gamma X_{11}^{-1}X_{12} & 0 \end{array}\right]\succeq 0
\end{align*}
which holds if and only if $\tilde{K}= X_{12}^{\text T}X_{11}^{-1}\Gamma$, i.e. ${K}=X_{12}^{\text T}X_{11}^{-1}$, and $X_{22}=X_{12}^{\text T}X_{11}^{-1}X_{12}+M$, where $M\succeq0$. Therefore, the feasible set of (\ref{eq:rank_const_SDP}) is a subset of the feasible set of (\ref{eq:inverse_const_SDP}). To conclude our proof, it suffices to show that the optimal value of $M$ in the optimization problem (\ref{eq:inverse_const_SDP}) is zero.

Lets assume $X^*$ is the optimal solution to (\ref{eq:inverse_const_SDP}), where ${M}^*$ is not zero. The optimal cost corresponding to this optimum, namely $J^*$, becomes
\begin{align*}
J^*&=\mathbf{Tr}[R({X_{12}^*}^{\text T}{X_{11}^*}^{-1}{X_{12}^*}+M^*)]+\mathbf{Tr}[Q{X_{11}^*}]+\lambda\|{K^*}\|_0\\
&=\mathbf{Tr}[R({X_{12}^*}^{\text T}{X_{11}^*}^{-1}{X_{12}^*})]+\mathbf{Tr}[Q{X_{11}^*}]+\lambda\|K^*\|_0+\mathbf{Tr}[Q{M^*}]
\end{align*}
Since $\mathbf{Tr}[Q{M^*}]\geq 0$, setting ${M^*}=0$, along with the same values of ${X_{11}^*}$, ${X_{12}^*}$, and ${K^*}$, also belonging to the feasible set, generates a lower cost. This contradicts the optimality of $X^*$, hence the optimal value of $M$ must be zero. Therefore, the optimization problem (\ref{eq:inverse_const_SDP}) solves the problem (\ref{eq:rank_const_SDP}) when $C=I_n$.

For the positive definite matrix $X_{11}$ and the positive scaler $\Gamma$, we have the matrix identity $\Gamma^{-\frac{1}{2}}X_{11}\Gamma^{-\frac{1}{2}}+\Gamma^{\frac{1}{2}}X_{11}^{-1}\Gamma^{\frac{1}{2}}\succeq 2I$. Thus, we can write
\begin{align*}
\Gamma X_{11}^{-1} \Gamma \succeq 2 \Gamma - X_{11}
\end{align*}
Therefore, the feasible set of (\ref{eq:stat_up_bound}) is a subset of the feasible set of the optimization problem (\ref{eq:inverse_const_SDP}). The rest of the proof is straightforward.
\end{proof}
%
%
\subsection{Rank Constraint Formulation in Presence of Input/Output Constraints \label{rank-cons-sec_b}}
Next, we present how an upper bound on the norm of the control input/output can be incorporated into our rank constrained formulation. It is known that for the positive scaler $\gamma$ satisfying $x_0^{\text T}X_{11}^{-1}x_0\leq \gamma^{-1}$, where $x_0$ is the initial state of the system and $X_{11}$ is the solution to the Lyapunov stability condition, the set 
\begin{align}
\mathcal{M}=\{x\in \mathbb{R}^n\:|\: x^{\text T}X_{11}^{-1}x\leq \gamma^{-1}\}
\end{align}
is an invariant set for the closed loop system. Employing the concept of invariant sets for linear systems, we can develop the rank constraint formulation of control system with bounded input norms. The details for two choices of norms utilize to bound the control input in given in the sequel. 
\begin{itemize}
\item {\bf System Norm:} The next theorem describes how the upper bound on the system norm of the control input can be incorporated into the controller synthesis problem using our proposed rank constrained formulation.
\begin{theorem}
The optimization problem (\ref{eq:rank_const_SDP}) can be modified to conservatively incorporate an upper bound on the system norm of the control input, i.e. $\|u\|_{L^{2}_{\infty}(\mathbb{R}^m)}\leq u_{\text max}$, as follows.
\begin{align}  
\min_{X}\:\: &\mathbf{Tr}[RX_{22}]+\mathbf{Tr}[QX_{11}]+\lambda\|K\|_0 \label{eq:rank_const_SDP_input_2} \tag{{\bf P3}}\\
\mbox{s.t.}\:\:\: &AX_{11}+X_{11}A^{\text T}+BX_{12}^{\text T}+X_{12}B^{\text T}+N=0, \notag\\
&X_{11} \succ 0,\notag\\
&K\in \mathcal{K},\notag\\
&\left[\begin{array}{cc}W & (KC)^{\text T}\\ (KC) & u_{\text max}^2I_m\end{array}\right]\succeq 0,\notag\\
&x_0^{\text T}Wx_0 \leq 1,\notag\\
&\mathbf{rank}(X)=n,\notag
\end{align}
where $x_0$ denotes the initial state and
\begin{align*}
X=\left[\begin{array}{ccc} X_{11}& X_{12}& I_{n}\\X_{12}^{\text T}&X_{22} &(KC)\\ I_{n}& (KC)^T & Z \\ \gamma I_n & Y & W
\end{array}\right].
\end{align*}
\end{theorem}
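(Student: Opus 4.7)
The plan is to interpret $V(x) = x^{\text T} X_{11}^{-1} x$ as a Lyapunov function whose sublevel set is forward invariant, and then verify that the new fourth block-row of the matrix $X$ in (\ref{eq:rank_const_SDP_input_2}), together with the enlarged rank-$n$ constraint, forces $W = \gamma X_{11}^{-1}$. Once $W$ is pinned to that value, the added scalar inequality $x_0^{\text T} W x_0 \leq 1$ and the $2\times 2$ block LMI encode exactly a Lyapunov-based sufficient condition for $\sup_{t\geq 0} \|u(t)\|_2 \leq u_{\max}$, while every other constraint of (\ref{eq:rank_const_SDP}) is preserved verbatim.

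First I would invoke the closed-loop Lyapunov equation: combining (\ref{eq:non_lin_SDP_b}) with the identity $X_{12}^{\text T} = KC X_{11}$ (which is a consequence of the rank constraint via Lemma \ref{lem:non_to_rank}) gives $(A+BKC)X_{11} + X_{11}(A+BKC)^{\text T} + N = 0$ with $N \succ 0$, so $\dot V(x) = -x^{\text T} X_{11}^{-1} N X_{11}^{-1} x \leq 0$ along trajectories of the closed-loop system and $\mathcal{M} = \{x : x^{\text T} X_{11}^{-1} x \leq \gamma^{-1}\}$ is forward invariant. Hence if $x_0 \in \mathcal{M}$, then $x(t) \in \mathcal{M}$ for all $t \geq 0$, and it is conservatively sufficient to enforce the worst-case bound $\sup_{x \in \mathcal{M}} \|KCx\|_2 \leq u_{\max}$. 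A direct generalized-eigenvalue computation (equivalently, the lossless single-constraint S-procedure) converts this into the matrix inequality $(KC)^{\text T}(KC) \preceq \gamma u_{\max}^2 X_{11}^{-1}$, which by Schur complement is equivalent to
\[
\begin{bmatrix} \gamma X_{11}^{-1} & (KC)^{\text T} \\ KC & u_{\max}^2 I_m \end{bmatrix} \succeq 0.
\]

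Next I would verify that the augmented rank-$n$ constraint on the $(3n+m)\times(2n+m)$ block matrix forces $W = \gamma X_{11}^{-1}$. Since $X_{11} \succ 0$ already has rank $n$, the first block-column of the enlarged $X$ spans its entire column space, so every other block-column must be a linear combination of it. Writing the third block-column as $[X_{11};\, X_{12}^{\text T};\, I_n;\, \gamma I_n]\, B$ for some $n\times n$ matrix $B$, the top block gives $B = X_{11}^{-1}$, the third block recovers $Z = X_{11}^{-1}$ (consistent with Remark \ref{rem:inverse_x}), and the fourth block delivers $W = \gamma X_{11}^{-1}$; an analogous argument on the second block-column reproduces $X_{22} = KCX_{11}(KC)^{\text T}$ and $X_{12}^{\text T} = KCX_{11}$, and additionally fixes $Y = \gamma(KC)^{\text T}$. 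Substituting $W = \gamma X_{11}^{-1}$ back into (\ref{eq:rank_const_SDP_input_2}), the scalar inequality $x_0^{\text T} W x_0 \leq 1$ is precisely $x_0 \in \mathcal{M}$ and the $2\times 2$ LMI is precisely the bound derived in the previous step, so (\ref{eq:rank_const_SDP_input_2}) implies $\|u\|_{L^{2}_{\infty}(\mathbb{R}^m)} \leq u_{\max}$. The main obstacle is the ellipsoid-to-LMI conversion; I expect to handle it through the generalized-eigenvalue computation above, and the word \emph{conservatively} in the statement refers only to the invariant-ellipsoid over-approximation of the reachable set, not to any slack in the matrix manipulations.
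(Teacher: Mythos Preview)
Your proposal is correct and follows essentially the same route as the paper: invariant-ellipsoid bound on $\sup_t\|KCx(t)\|_2$, Schur-complement conversion to the $2\times2$ LMI involving $\gamma X_{11}^{-1}$, and then use of the enlarged rank-$n$ constraint to force $W=\gamma X_{11}^{-1}$. The only cosmetic difference is that the paper reaches the last identification by citing Lemma~\ref{lem:non_to_rank}, whereas you spell out the column-space argument directly (and in passing recover $Y=\gamma(KC)^{\text T}$ and $Z=X_{11}^{-1}$); this is the same content, just made explicit.
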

\begin{proof}
Based on the lines in \cite[p. 103]{Boyd:1994}, we have
\begin{align*}
\|u\|_{L^{2}_{\infty}(\mathbb{R}^m)}&=\sup_{t\geq 0}\:\|u(t)\|_2=\sup_{t\geq 0}\:\|KCx(t)\|_2\\
&\leq \sup_{x \in \mathcal{M}}\:\|KCx\|_2\\
&= \sup_{x \in \mathcal{M}}\:\|KCX_{11}^{1/2}X_{11}^{-1/2}x\|_2\\
&=\sqrt{\lambda_{\text max}(X_{11}^{1/2}(KC)^{\text T}(KC)X_{11}^{1/2})\gamma^{-1}}
\end{align*}
Thus, the input constraint $\|u\|_{L^{2}_{\infty}(\mathbb{R}^m)}\leq u_{\text max}$ holds for all $t\geq 0$ if
\begin{align}
\left[\begin{array}{cc}\gamma X_{11}^{-1} & (KC)^{\text T}\\ (KC)& u_{\text max}^2I_m\end{array}\right]\succeq 0,\notag\\
x_0^{\text T}\gamma X_{11}^{-1}x_0 \leq 1.\notag
\end{align}
The existence of the term $\gamma X_{11}^{-1}$ in the above matrix inequality makes it nonlinear, however, Utilizing Lemma \ref{lem:non_to_rank}, it can be verified that the rank constraint $\mathbf{rank}(X)=n$, applied on the modified matrix $X$, is equivalent to introducing the variables $W=\gamma X_{11}^{-1}$. The rest of the proof is straightforward.
\end{proof}
\item{\bf Infinity Norm:} If the constraint on the control input is in the form of $\|u(t)\|_{L^{\infty}_{\infty}(\mathbb{R}^m)} \leq u_{\text max}$, it can be represented using the following matrix inequalities \cite[p. 104]{Boyd:1994}. 
\begin{align}
\left[\begin{array}{cc}V & KC\\(KC)^{\text T} & \gamma X_{11}^{-1}\end{array}\right]\succeq 0,\notag\\
V_{ii}\leq  u_{\text max}^2\notag\\
x_0^{\text T}\gamma X_{11}^{-1}x_0 \leq 1.\notag
\end{align}
Therefore, this problem can also be posed as a rank constrained problem through the next theorem.
\begin{theorem}
The optimization problem (\ref{eq:rank_const_SDP}) can be modified to conservatively incorporate an upper bound on the infinity norm of the control input, i.e. $\|u\|_{L^{\infty}_{\infty}(\mathbb{R}^m)} \leq u_{\text max}$, as follows.
\begin{align}  
\min_{X}\:\: &\mathbf{Tr}[RX_{22}]+\mathbf{Tr}[QX_{11}]+\lambda\|K\|_0 \label{eq:rank_const_SDP_input_inf} \tag{{\bf P3'}}\\
\mbox{s.t.}\:\:\: &AX_{11}+X_{11}A^{\text T}+BX_{12}^{\text T}+X_{12}B^{\text T}+N=0, \notag\\
&X_{11} \succ 0,\notag\\
&K\in \mathcal{K},\notag\\
&\left[\begin{array}{cc}V & KC\\(KC)^{\text T} & W\end{array}\right]\succeq 0,\notag\\
&V_{ii}\leq  u_{\text max}^2,\notag\\
&x_0^{\text T}Wx_0 \leq 1,\notag\\
&\mathbf{rank}(X)=n,\notag
\end{align}
where $x_0$ denotes the initial state and
\begin{align*}
X=\left[\begin{array}{ccc} X_{11}& X_{12}& I_{n}\\X_{12}^{\text T}&X_{22} &(KC)\\ I_{n}& (KC)^T & Z \\ \gamma I_n & Y & W
\end{array}\right].
\end{align*}
\end{theorem}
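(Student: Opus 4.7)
The plan is to mirror the structure of the proof for the system-norm case and then show that the added block row in the extended matrix $X$ enforces the auxiliary identity $W=\gamma X_{11}^{-1}$ needed to convert the nonlinear matrix inequality into a convex one plus a rank constraint.

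First, I would recall the invariant-ellipsoid characterization cited from Boyd et al.\ (p.\ 104): whenever $x_{0}^{\text T}(\gamma X_{11}^{-1})x_0\le 1$, the ellipsoid $\mathcal{M}$ is forward-invariant for the closed-loop system, so the infinity-norm input bound $\|u\|_{L^{\infty}_{\infty}(\mathbb{R}^m)}\le u_{\max}$ is implied by the pair of matrix inequalities
\begin{align*}
\left[\begin{array}{cc} V & KC\\ (KC)^{\text T} & \gamma X_{11}^{-1}\end{array}\right]\succeq 0,\qquad V_{ii}\le u_{\max}^{2},\qquad x_{0}^{\text T}\gamma X_{11}^{-1}x_{0}\le 1.
\end{align*}
All terms here are affine in the decision variables except for the entry $\gamma X_{11}^{-1}$, which is the only obstruction to a convex reformulation.

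Next, I would handle that obstruction exactly as in the system-norm proof. Introduce the slack matrix $W$ in place of $\gamma X_{11}^{-1}$ in the two inequalities above. What remains is to certify that the optimizer is forced to take $W=\gamma X_{11}^{-1}$. For this I would invoke Lemma \ref{lem:non_to_rank} (together with Corollary \ref{cor:rank_sq_mat}) applied to the augmented $X$ in the theorem statement. Since $X_{11}\succ 0$, the same block-elimination argument that produced $Z=X_{11}^{-1}$, $X_{22}=(KC)X_{11}(KC)^{\text T}$, and $X_{12}^{\text T}=(KC)X_{11}$ from the first three block rows extends to the appended row $\bigl[\gamma I_{n}\;\; Y\;\; W\bigr]$: the rank-$n$ condition forces this row to equal $\gamma X_{11}^{-1}\cdot\bigl[X_{11}\;\; X_{12}\;\; I_{n}\bigr]$, which in particular yields $W=\gamma X_{11}^{-1}$ (and $Y=\gamma X_{11}^{-1}X_{12}$ as a harmless by-product).

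Substituting this identity back into the two matrix inequalities and the scalar inequality recovers exactly the convex constraints listed in \ref{eq:rank_const_SDP_input_inf}, while the constraints inherited from \ref{eq:rank_const_SDP} (Lyapunov equality, positive definiteness of $X_{11}$, $K\in\mathcal{K}$, and the original nonlinear relations among $X_{11},X_{12},X_{22},K$) are all subsumed by the same rank constraint via the proof of \ref{eq:rank_const_SDP}. Conversely, any feasible point of \ref{eq:rank_const_SDP_input_inf} yields, through $W=\gamma X_{11}^{-1}$, a feasible point satisfying the infinity-norm bound on $u$. This establishes the stated equivalence. I expect the only mildly delicate step to be verifying cleanly that the rank of the augmented (now non-square) matrix is still exactly $n$ iff \emph{both} the relations from Corollary \ref{cor:rank_sq_mat} and the identity $W=\gamma X_{11}^{-1}$ hold; this follows by repeating the block LDU factorization from Lemma \ref{lem:non_to_rank} with one extra block row, which is routine and can be referenced rather than recomputed, as the theorem statement itself suggests by calling the remainder "straightforward."
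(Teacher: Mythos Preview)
Your proposal is correct and mirrors the paper's approach exactly: the paper states the Boyd et al.\ infinity-norm matrix inequalities involving $\gamma X_{11}^{-1}$ just before the theorem and then gives no formal proof, implicitly deferring to the system-norm argument you describe (introduce $W$, use Lemma~\ref{lem:non_to_rank} on the augmented $X$ to force $W=\gamma X_{11}^{-1}$, the rest is straightforward). Your final paragraph on the non-square block-LDU check is already more explicit than anything the paper writes down.
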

\end{itemize}
\begin{remark}
Other norms such as element-wise bound on the control input or the norm bounds on the system outputs can also be assimilated into the rank constraint using similar techniques. The details are omitted with the purpose of improving the readability of the manuscript.
\end{remark}
All of the optimization problems posed so far are NP-hard due to the existence of the $\ell_0$-norm in the cost function and the rank constraint. Therefore, no polynomial time algorithm capable of solving it in its general form, exists. In the next two sections, we propose a method to sub-optimally solve the problem, then, discuss a special case of the problem where only the sparsity of the controller is of importance.

\section{Convex Relaxtions of the Optimal Control Problem} \label{sec:conv_relaxation}
In this section, we study the general problem of designing a sparse optimal feedback controller. Although the results we present in the sequel are applicable to the optimization problem (\ref{eq:non_lin_SDP}) in its general form, to enhance the legibility of the paper, we choose to state them in the absence of the constraints on the control inputs and system outputs. Hence, we consider the problem (\ref{eq:rank_const_SDP}), which is a combinatorial problem, due to the existence of the $\ell_0$-norm, in fact a quasi-norm, in the cost and the rank constraint. The weighted $\ell_1$-norm minimization problem is a well-known heuristic for cardinality minimization \cite{Donoho:2006,Candes:2006,Candes:2008}. Although weighted $\ell_1$-norm relaxation does not guarantee the exact optimal controller recovery, it reduces the complexity of the problem substantially. Substituting the cardinality penalizing term with the weighted $\ell_1$-norm of the controller gain matrix, we obtain the following relaxed optimization problem
\begin{align}\label{eq:rank_const_card_SDP_ell_1} 
\min_{X} &\mathbf{Tr}[QX_{11}]+\mathbf{Tr}[RX_{22}]+\lambda\|W\circ K\|_1 \tag{{\bf C1}} \\
\mbox{s.t.}\:\:\: &AX_{11}+X_{11}A^{\text T}+BX_{12}^{\text T}+X_{12}B^{\text T}+N=0, \notag\\
&X_{11}\succ 0,\notag\\
&K\in \mathcal{K},\notag\\
&\mathbf{rank}(X)=n. \notag
\end{align}
where the weight matrix $K$ is a positive matrix with appropriate dimensions and
\begin{align*}
X=\left[\begin{array}{ccc} X_{11}& X_{12}& I_{n}\\X_{12}^{\text T}&X_{22} &(KC)\\I_{n}& (KC)^{\text T} & Z \end{array}\right].
\end{align*}
%

Combinatorial nature of the our rank constrained problem, makes the search for the optimal point computationally intractable. Therefore, a systematic solution to general rank constrained problem has remained open \cite{Yu:2011,Goemans:1995}. Nonetheless, attempts have been made to solve specific rank constrained problems, and algorithms proposed to locally solve such problems \cite{Delgado:2014, Orsi:2006}. Here, we propose to use a particular form of Alternating Direction Method of Multipliers (ADMM) to solve our rank constrained problem.

\subsection{ADMM for Solving the Relaxed Problem \label{ADMM-sec}}
ADMM was originally developed in 1970s \cite{Glowinski:1975,Gabay:1976}, and has been used for optimization purposes since. Boyd \emph{et al.}, in \cite{Boyd:2011}, argued that this method can be efficiently applied to large-scale optimization problems. For non-convex problems, the convergence of ADMM is not guaranteed, also, it may not reach the global optimum when it converges, thus, the convergence point should be considered as a local optimum. 

For the optimization problem (\ref{eq:rank_const_card_SDP_ell_1}), one way to perform convex relaxation is replacing the rank constraint on matrix $X$ with a positive semi-definite constraint, i.e. $X\succeq 0$. Since $X_{11}$ is positive definite, using lemma \ref{lem:non_to_rank}, it can be seen that the rank constraint in (\ref{eq:rank_const_card_SDP_ell_1}) is equivalent to 
\begin{align}
\left[\begin{array}{cc} X_{22} &(KC)\\ (KC)^{\text T} & Z \end{array}\right]-\left[\begin{array}{cc}
 X_{12}^{\text T}  \\
 I_n
\end{array}\right]X_{11}^{-1}\left[\begin{array}{cc}
X_{12} & I_n 
\end{array}\right]=0,
\end{align}
which implies that the Schur complement of the matrix $X$ should be equal to zero, while $X\succeq0$ is the same as positive semi-definiteness of its Schur complement. Therefore, the set defined by the PSD constraint is a super-set for the one defined by the rank constraint. Now, if we define the convex set
\begin{align*}
\mathcal{C}=\{X \:| &AX_{11}+X_{11}A^{\text T}+BX_{12}^{\text T}+X_{12}B^{\text T}+N=0, \\
&X_{11} \succ 0,\:\:\:K\in \mathcal{K},\:\:\:X\succeq 0 \}
\end{align*}
and $\mathcal{S}$ denotes the set of $(2n+m)\times (2n+m)$ symmetric matrices with rank equal to $n$, the minimization (\ref{eq:rank_const_card_SDP_ell_1}) can be represented as
\begin{align}
\min_{X}\:\:\:\: &f(X)\\
s.t \:\:\:\: &X \in \mathcal{C} \cap \mathcal{S} \notag
\end{align}
where 
\begin{align*}
f(X)=\mathbf{Tr}[RX_{22}]+\mathbf{Tr}[QX_{11}]+\lambda\|W \circ K\|_1
\end{align*}
and the weight matrix $W$ is a positive real matrix with appropriate dimensions. Considering the above formulation, the ADMM algorithm can be carried out by repeatedly performing the steps stated in the sequel till certain convergence criteria is satisfied \cite[p. 74]{Boyd:2011}.
\begin{subequations}\label{eq:alg}
\begin{alignat}{1}
&X^{(k+1)}=\arg \min_{X\in \mathcal{C}} \:\:\:\: f(X)+(\rho/2)\|X-V^{(k)}+Y^{(k)}\|^2_F \label{ADMM-gen-convex}\\
&V^{(k+1)}=\Pi_\mathcal{S}(X^{(k+1)}+Y^{(k)})\label{ADMM-gen-proj}\\
&Y^{(k+1)}=Y^{(k)}+X^{(k+1)}-V^{(k+1)}\label{ADMM-gen-updat}\\
&w_{ij}^{(k+1)}=\frac{1}{|k_{ij}^{(k)}|+\delta}\label{eq:w_update}
\end{alignat}
\end{subequations}
where $w_{ij}$ and $k_{ij}$ denote the $(i,j)$ entries of the matrices $W$ and $K$, respectively. The convexity of the cost function and the constraints makes (\ref{ADMM-gen-convex}) a convex problem, hence, it can be solved by various computationally efficient methods. The operator $\Pi_\mathcal{S}(.)$, in (\ref{ADMM-gen-proj}), denotes projection onto the set $\mathcal{S}$. Although the projection on a non-convex set is generally not an easy task, it can be carried out exactly in the case of projecting on the set of matrices with pre-defined rank. In our case, the set $\mathcal{S}$ is the set of matrices with rank $n$, thus, $\Pi_\mathcal{S}(.)$ can be determined by carrying out Singular Value Decomposition (SVD) and keeping the top dyads, i.e.
\begin{align}\label{eq:SVD_thresh}
\Pi_\mathcal{S}(X)\triangleq \sum_{i=1}^n \sigma_iu_iv_i^{\text T}
\end{align}
where $\sigma_i$, $i=1,\cdots,n$ are the $n$ largest singular values of matrix $x$, and the vectors $u_i\in\mathbb{R}^{(2n+m)}$ and $v_i \in \mathbb{R}^{(2n+m)}$ are their corresponding left and right singular vectors. 
The step (\ref{ADMM-gen-updat}) in the algorithm is a simple matrix manipulation to update the auxiliary variable $u$, which is exploited in the next iteration.

The last step of the heuristic (\ref{eq:alg}) is to update the weight on the entries of the controller matrix approximately inversely proportional to the value of the corresponding matrix entry recovered from the previous iteration. Hence, the next iteration optimization will be forced to concentrate on the entries with smaller magnitudes, which results in promoting the controller sparsity. It should also be noted the relatively small constant $\delta$ is added to the denominator of the update law (\ref{eq:w_update}) to avoid instability of the algorithm, especially when a recovered controller entry turns out to be zero in the previous iteration \cite{Candes:2008}.

Initializing with the stabilizing $\text{LQR}$ controller along with its corresponding Lyapunov matrix, a sub-optimal minimizer to the problem (\ref{eq:rank_const_card_SDP_ell_1}) can be obtained by iterating the steps (\ref{ADMM-gen-convex}-\ref{ADMM-gen-updat}) until the convergence is achieved. The algorithm's stopping criteria is either reaching the maximum number of iterations or $\varepsilon<\varepsilon^*$, where $\varepsilon$ update is performed using the following equation.
\begin{align}
\varepsilon^{(k+1)} \triangleq \mathbf{max}(\|X^{(k+1)}-V^{(k+1)}\|_F,\|V^{(k+1)}-V^{(k)}\|_F)
\label{eq:stop_cond}
\end{align}
The small enough entries of the generated controller gain can then be truncated to yield a sparse controller matrix, namely $\bar{K}$, while considering the extent of its adverse effect on the stability and performance of the closed loop system. The step-by-step procedure is described in Algorithm 1.
\begin{figure}[t]
\begin{center}
\begin{tabular}{| l |}
\hline
{\bf Algorithm 1:} Solution to {\bf \ref{eq:rank_const_card_SDP_ell_1}}\\ \hline\hline
{\bf Inputs:} $A$, $B$, $C$, $Q$, $R$, $\lambda$, $\mathcal{K}$, $\rho$, $\delta$ and $\varepsilon^*$\\
~1: {\em Initialization:} \\
~~~~Find $X^{(0)}$ by solving (\ref{ADMM-gen-convex}) for $\lambda=0,\rho=0$ ($\text{LQR}$),\\
~~~~Set $V^{(0)}=X^{(0)}$, $Y^{(0)}=0\times I_{(2n+m)}$, and $n=0$,\\
~2: {\bf While} $\varepsilon^n \leq \varepsilon^*$  \bf{do}\\
~3: \hspace{.1in} Update $X^{(n)}$ by solving (\ref{ADMM-gen-convex}),\\
~4: \hspace{.1in} Update $V^{(n)}$ using Eq.~(\ref{ADMM-gen-proj}),\\
~5: \hspace{.1in} Update $Y^{(n)}$ using Eq.~(\ref{ADMM-gen-updat}),\\
~6: \hspace{.1in} Update $W^{(n)}$ using Eq.~(\ref{eq:w_update}),\\
~7: \hspace{.1in} Update $\varepsilon^{(n+1)}$ using Eq.~(\ref{eq:stop_cond}),\\
~8: \hspace{.1in} $n\leftarrow n+1$,\\
~9: {\bf end while}\\
10: Truncate $K$,\\
{\bf Output:} $\bar{K}$ \\
\hline
\end{tabular} 
\vspace{-.27in}
\end{center}
\end{figure}
As said before, the truncation step in the algorithm should be performed with the necessary precautions, since not only does it deteriorate the obtained optimal performance but it also may destabilize the closed loop system. The following proposition provides the sufficient condition under which the truncation process does not have cause instability in the closed loop system.
\begin{proposition}
The truncated controller, denoted by $\bar{K}$, stabilizes the system if the truncation threshold $\xi$ is bounded by
\begin{align}
\xi<\frac{\sigma_{min}(N)}{\sum_{ij}\|BE_{ij}CX_{11}+X_{11}(BE_{ij}C)^{\text T}\|_2}
\end{align}
where $\sigma_{min}(N)$ denotes the smallest singular value of the matrix $N$, which is the positive definite matrix satisfying 
\begin{align*}
(A+BKC)X_{11}+X_{11}(A+BKC)^{\text T}+N=0,
\end{align*}
and $E_{ij}\in\mathbb{R}^{m\times p}$ is the matrix whose only nonzero entry, equal to $1$, is its $(i,j)$-entry. 
\end{proposition}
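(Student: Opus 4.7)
The plan is to analyze the truncation as a small perturbation of the already stabilizing controller $K$ and to guarantee that the Lyapunov matrix $X_{11}$, which certifies stability of $A+BKC$, continues to certify stability of $A+B\bar{K}C$. Writing $\Delta = \bar{K}-K$, I note that truncation only alters entries whose magnitude is below $\xi$, so every entry of $\Delta$ satisfies $|\Delta_{ij}|<\xi$ and $\Delta$ admits the expansion $\Delta=\sum_{(i,j)\in\mathcal{T}}\Delta_{ij}E_{ij}$, where $\mathcal{T}$ is the set of truncated indices.

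First I would substitute $\bar{K}=K+\Delta$ into the closed-loop operator and evaluate the Lyapunov form of $X_{11}$ with the new controller:
\begin{align*}
(A+B\bar{K}C)X_{11}+X_{11}(A+B\bar{K}C)^{\text T}
=-N + B\Delta C X_{11}+X_{11}C^{\text T}\Delta^{\text T}B^{\text T}.
\end{align*}
Denoting the perturbation matrix by $H=B\Delta C X_{11}+X_{11}C^{\text T}\Delta^{\text T}B^{\text T}$, this becomes $-(N-H)$, and by the standard Lyapunov stability theorem $A+B\bar{K}C$ is Hurwitz whenever $N-H\succ 0$, since $X_{11}\succ 0$ is maintained.

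Next I would verify $N-H\succ 0$ through a spectral norm bound. Because both $N$ and $H$ are symmetric, Weyl's inequality gives $\lambda_{\min}(N-H)\geq \sigma_{\min}(N)-\|H\|_2$, so it suffices to show $\|H\|_2<\sigma_{\min}(N)$. Expanding $H$ in the $E_{ij}$ basis and applying the triangle inequality for the spectral norm yields
\begin{align*}
\|H\|_2 \;\leq\; \sum_{(i,j)\in\mathcal{T}} |\Delta_{ij}|\,\bigl\|BE_{ij}CX_{11}+X_{11}(BE_{ij}C)^{\text T}\bigr\|_2 \;<\; \xi \sum_{i,j}\bigl\|BE_{ij}CX_{11}+X_{11}(BE_{ij}C)^{\text T}\bigr\|_2,
\end{align*}
where in the last step I enlarge $\mathcal{T}$ to the full index set and use $|\Delta_{ij}|<\xi$. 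Combining these two inequalities with the hypothesis on $\xi$ delivers $\|H\|_2<\sigma_{\min}(N)$, hence $N-H\succ 0$ and stability follows.

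I do not anticipate a serious conceptual obstacle; the only delicate point is making sure the spectral-norm bound is applied to a symmetric perturbation so that Weyl's inequality produces the correct eigenvalue lower bound, and that $\Delta$ is expanded coherently over a basis of $m\times p$ matrices so the sum in the denominator of the threshold matches the hypothesis exactly. Everything else reduces to the triangle inequality and the preserved positive definiteness of $X_{11}$.
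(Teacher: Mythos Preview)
Your proposal is correct and follows essentially the same route as the paper: both perturb the Lyapunov identity for $K$, reduce stability of $\bar{K}$ to showing $N - H \succ 0$ for the symmetric perturbation $H$, and then bound the spectral norm of $H$ by expanding $\Delta$ (the paper's $K_\xi=-\Delta$) over the $E_{ij}$ basis and applying the triangle inequality together with $|\Delta_{ij}|<\xi$. The only cosmetic difference is that you invoke Weyl's inequality explicitly, whereas the paper phrases the same step via the quadratic form $|x^{\text T}Hx|<\sigma_{\min}(N)\,x^{\text T}x$.
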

\begin{proof}
Defining the matrix of the truncated entries of the controller as $K_{\xi}=K-\bar{K}$, we will have
\begin{align*}
(A+B(\bar{K}+K_{\xi})C)X_{11}+X_{11}(A+B(\bar{K}+K_{\xi})C)^{\text T}+N&=0,\\
(A+B\bar{K}C)X_{11}+X_{11}(A+B\bar{K}C)^{\text T}+BK_{\xi}CX_{11}+X_{11}(BK_{\xi}C)^{\text T}+N&=0.
\end{align*}
Hence, the truncated controller stabilized the system if 
\begin{align*}
BK_{\xi}CX_{11}+X_{11}(BK_{\xi}C)^{\text T}+N\succ 0,
\end{align*}
which is equivalent to the following inequality, for any nonzero vector $x$ with appropriate dimension,
\begin{align*}
x^{\text T}(BK_{\xi}CX_{11}+X_{11}(BK_{\xi}C)^{\text T}+N)x> 0.
\end{align*}
The previous inequality holds if we have
\begin{align*}
|x^{\text T}(BK_{\xi}CX_{11}+X_{11}(BK_{\xi}C)^{\text T})x|<\sigma_{min}(N)x^{\text T}x .
\end{align*}
Noting that $K_{\xi}=\sum_{(i,j)\in \mathcal{D}} k_{ij}E_{ij}$, where $\mathcal{D}=\left\{(i,j)|~|k_{ij}|<\xi\right\}$, we rewrite the above inequality as
\begin{align*}
|x^{\text T}( \sum_{(i,j)\in \mathcal{D}} k_{ij} [ BE_{ij}CX_{11}+X_{11}(BE_{ij}C)^{\text T}])x|<\sigma_{min}(N)x^{\text T}x .
\end{align*}
which is true if
\begin{align*}
\sum_{(i,j)\in \mathcal{D}} |k_{ij}| \|BE_{ij}CX_{11}+X_{11}(BE_{ij}C)^{\text T} \|_2<\sigma_{min}(N).
\end{align*}
Since $|k_{ij}|<\xi$ for all $(i,j) \in \mathcal{D}$, we can conservatively replace the above inequality with
\begin{align*}
\xi \sum_{\forall (i,j)} \|BE_{ij}CX_{11}+X_{11}(BE_{ij}C)^{\text T} \|_2<\sigma_{min}(N),
\end{align*}
which completes our proof.
\end{proof}
\begin{remark}
For the problem of optimal sparse state feedback control design, i.e $C=I_n$, if there exists no \emph{a priori} defined controller structure or the constraint on the controller matrix is in the form of sparsity pattern, one way to perform the truncation is to solve the minimization problem, assuming that all of the variables have already converged to their optimal values except the controller matrix. Thus, we will have
\begin{align}\label{eq:K_trunc}
\min_{K}\:\:\:\: &\lambda\|K\|+(\rho/2)\|K-(K^{(V^*)}-K^{(Y^*)})\|^2_F \\
\mbox{s.t.}\:\:\: &K\in \mathcal{K}\notag.
\end{align}
where $K^{(V^*)}$ and $K^{(Y^*)}$ are the sub-blocks of the optimal values of $V^*$ and $Y^*$, respectively, which correspond to the controller gain matrix, and $\|.\|$ can be chosen as either $\ell_1$ or $\ell_0$-norm. Moreover, in such problems, the problem (\ref{eq:K_trunc}) has a unique solution that can be obtained analytically as follows \cite{Lin:2013,Boyd:2011}. For example, if the norm used in (\ref{eq:K_trunc}) is $\ell_0$-norm, the optimal values of the elements, not constrained to zero, can be obtained through the following element-wise truncation operator  
\begin{align}\label{eq:truncate_l0}
K^*_{ij}=\left\{\begin{array}{ll}
K^{(V^*)}_{ij}-K^{(Y^*)}_{ij},& |K^{(V^*)}_{ij}-K^{(Y^*)}_{ij}|>\sqrt{2\lambda/\rho}\\
0,&{\text otherwise.}
\end{array}
\right.
\end{align}
\end{remark}

%
\section{Sparsest Stabilizing Output Feedback Controller Design \label{spar-patt-iden-sec}}
Next, we study the special case in which obtaining a stabilizing constant gain feedback controller with the sparsest feasible structure, i.e. considering the  constraints, is desirable. To this end, we eliminate the terms which penalize the system performance from the cost, i.e. both $R$ and $Q$ are zero. One of the applications that can be addressed using this problem setup is the problem of stabilizing controller synthesis for networks/systems where establishing communication links between nodes are so costly that the control effort and error cost are almost negligible. Having $R=0$, it can be seen the variable $X_{22}$ is irrelevant in this case, so its corresponding constraints can be removed from the optimization program. Therefore, we will have
 \begin{align}\label{eq:rank_const_SPI}
\min_{X_{11},X_{12},K,N} &\|K\|_0  \tag{{\bf P4}}\\
\mbox{s.t.}\:\:\: &AX_{11}+X_{11}A^{\text T}+BX_{12}^{\text T}+X_{12}B^{\text T}+N=0, \notag\\
&X_{11}\succ 0,~N\succ 0\notag\\
&K \in \mathcal{K}, \notag\\
&\mathbf{rank}\left[\begin{array}{cc} X_{11}& X_{12}\\I_{ n}& (KC)^{\text T} \end{array}\right]=n, \notag
\end{align}
The following lemma helps us convert rank constrained cardinality minimization problem (\ref {eq:rank_const_SPI}) into an affine rank minimization problem.
\begin{lemma} \label{lem:rank_to_cost}
Consider the following rank constrained cardinality minimization problem
\begin{align}\label{eq:lem_rank_to_cost_a}
\min_{Y}\:\: &\|W_1YW_2\|_0 \\
\mbox{s.t.}\:\:\: &\mathcal{L}_1(Y)=\mu, \notag\\
&\mathcal{L}_2(Y)\succeq 0, \notag \\
&\mathbf{rank}(Y)=\mathbf {rank}(Y_{11})=n, \notag
\end{align}
where $Y$ is partitioned as $Y=\left[ \begin{smallmatrix} Y_{11}& Y_{12}\\Y_{21} & Y_{22} \end{smallmatrix}\right]\in \mathbb{R}^{p\times q}$, $W_1 \in \mathbb{R}^{a \times p}$ and $W_2 \in \mathbb{R}^{q \times b}$ are weight matrices, $\mathcal{L}_1$ and $\mathcal{L}_2$ are two arbitrary maps, and $Y_{11}\in \mathbb{R}^{n \times n}$ is a full rank square matrix ($n<min\{p,q\}$). If the optimization problem (\ref{eq:lem_rank_to_cost_a}) is feasible, it can be equivalently formulated as
\begin{align} \label{eq:lem_rank_to_cost_b}
\min_{Y}\:\: &\|W_1YW_2\|_0+\nu  \mathbf{rank} (Y) \\
\mbox{s.t.}\:\:\: &\mathcal{L}_1(Y)=\mu, \notag\\
&\mathcal{L}_2(Y)\succeq 0, \notag\\
&\mathbf {rank}(Y_{11})=n, \notag
\end{align}
for any $\nu>ab$.
\end{lemma}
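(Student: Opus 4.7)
The plan is to show two containments: any feasible point for (\ref{eq:lem_rank_to_cost_a}) is feasible for (\ref{eq:lem_rank_to_cost_b}) with an objective value shifted by the constant $\nu n$, and the optimum of (\ref{eq:lem_rank_to_cost_b}) necessarily occurs at a point of rank exactly $n$, so that it also satisfies the rank constraint of (\ref{eq:lem_rank_to_cost_a}). The central observation is that $W_1 Y W_2$ is an $a\times b$ matrix, so $\|W_1YW_2\|_0 \le ab$ for every feasible $Y$, which provides the quantitative gap that makes the penalization $\nu\,\mathbf{rank}(Y)$ with $\nu > ab$ effectively enforce the missing hard constraint.

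First I would note the elementary submatrix inequality $\mathbf{rank}(Y)\ge \mathbf{rank}(Y_{11})=n$, so every feasible point of (\ref{eq:lem_rank_to_cost_b}) already satisfies $\mathbf{rank}(Y)\ge n$. Hence the feasible set of (\ref{eq:lem_rank_to_cost_a}) is contained in that of (\ref{eq:lem_rank_to_cost_b}), and on this smaller set the two objectives differ only by the constant $\nu n$. By assumption (\ref{eq:lem_rank_to_cost_a}) admits a feasible point $Y_0$ with $\mathbf{rank}(Y_0)=n$, and evaluating the objective of (\ref{eq:lem_rank_to_cost_b}) at $Y_0$ gives the upper bound
\begin{equation*}
p^{\star}_{(\ref{eq:lem_rank_to_cost_b})} \le \|W_1 Y_0 W_2\|_0 + \nu n \le ab + \nu n.
\end{equation*}

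Next I would argue by contradiction that any minimizer $Y^{\star}$ of (\ref{eq:lem_rank_to_cost_b}) satisfies $\mathbf{rank}(Y^{\star})=n$. Suppose instead $\mathbf{rank}(Y^{\star})\ge n+1$. Then the objective at $Y^{\star}$ is at least $\nu(n+1) = \nu n + \nu > \nu n + ab$, using $\nu > ab$, which strictly exceeds the upper bound obtained from $Y_0$, contradicting optimality. Therefore $\mathbf{rank}(Y^{\star})=n$, so $Y^{\star}$ is feasible for (\ref{eq:lem_rank_to_cost_a}) as well. Since on the common feasible set the two objectives differ by the constant $\nu n$, the minimizers coincide and the optimal values satisfy $p^{\star}_{(\ref{eq:lem_rank_to_cost_b})} = p^{\star}_{(\ref{eq:lem_rank_to_cost_a})} + \nu n$, proving the equivalence.

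The only slightly delicate point is verifying the entrywise cardinality bound $\|W_1 Y W_2\|_0 \le ab$; this is immediate because $W_1YW_2 \in \mathbb{R}^{a\times b}$ has only $ab$ entries, so at most $ab$ of them can be nonzero. I expect no real technical obstacle; the proof is essentially an exact-penalty argument, and the choice $\nu>ab$ is tight precisely because the worst-case swing in $\|W_1YW_2\|_0$ between two feasible points is bounded by $ab$, while increasing the rank by one adds $\nu$ to the penalized objective.
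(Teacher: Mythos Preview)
Your proposal is correct and follows essentially the same exact-penalty argument as the paper: both proofs use the submatrix inequality $\mathbf{rank}(Y)\ge\mathbf{rank}(Y_{11})=n$, the trivial bound $\|W_1YW_2\|_0\le ab$, and then observe that any feasible point of (\ref{eq:lem_rank_to_cost_b}) with rank at least $n+1$ incurs a penalty exceeding $ab+\nu n$, which is already beaten by any rank-$n$ feasible point of (\ref{eq:lem_rank_to_cost_a}). Your presentation is slightly cleaner in making the constant shift $\nu n$ explicit, but the logic is identical.
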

\begin{proof}
Let $Y^*$ be the optimum of (\ref{eq:lem_rank_to_cost_a}), then $\mathbf{rank}(Y^*_{11})=n$ and it satisfies both equality and inequality constraints. Therefore, it belongs to the feasible set of (\ref{eq:lem_rank_to_cost_b}). Furthermore, for every point $Y$ in the feasible set of (\ref{eq:lem_rank_to_cost_b}) with the rank greater than $n$, we have
\begin{align*}
J-J^*&=\|W_1YW_2\|_0+\nu \mathbf{rank}(Y)\\
&\hspace{.15in}-(\|W_1Y^*W_2\|_0+\nu \mathbf{rank}(Y^*))\\
&=(\|W_1YW_2\|_0-\|W_1Y^*W_2\|_0)\\
&\hspace{.15in}+\nu(\mathbf{rank}(Y)-\mathbf{rank}(Y^*))\\
&\geq -\|W_1Y^*W_2\|_0+\nu(\mathbf{rank}(Y)-\mathbf{rank}(Y^*))
\end{align*}
Since $W_1Y^*W_2\in \mathbb{R}^{a\times b}$, it is safe to bound the cardinality as $\|W_1Y^*W_2\|_0\leq ab$. Using  $\mathbf{rank}(Y)-\mathbf{rank}(Y^*)\geq 1$, we can write
\begin{align*}
J-J^*&> -ab+\nu
\end{align*}
Hence, the cost for all $Y$, with rank greater than $n$, is higher than the cost of $Y^*$, if $\nu>ab$. This means the optimum of (\ref{eq:lem_rank_to_cost_b}) should be of rank $n$. Knowing that $Y^*$ has the minimum cardinality among the matrices with rank equal to $n$, we conclude that $Y^*$ is also the optimum for (\ref{eq:lem_rank_to_cost_b}).

Conversely, let ${\bar{Y}}$ be the optimal point for (\ref{eq:lem_rank_to_cost_b}). As it is shown in the first part of the proof, the cost generated by matrices, with the rank higher than $n$ is greater than that of rank $n$ matrices, for $\nu>ab$. Thus, the rank of $\bar{Y}$ must be $n$, unless no point with the rank equal to $n$ exists in the feasible set of (\ref{eq:lem_rank_to_cost_b}). However, this implies that (\ref{eq:lem_rank_to_cost_a}) is infeasible, which contradicts the lemma's assumption. Therefore, $\bar{Y}$ is the minimizer of the cardinality term of the cost function among all rank $n$ matrices in the feasible set of (\ref{eq:lem_rank_to_cost_b}), i.e. $\bar{Y}$ is the minimizer of (\ref{eq:lem_rank_to_cost_a}).
\end{proof}
%
\begin{remark}
In the optimization problem (\ref{eq:lem_rank_to_cost_a}), if the cost which is to be minimized is the rank of the matrix $W_1YW_2$, instead of its cardinality, lemma \ref{lem:rank_to_cost} can still be applied to the problem for any $\nu>min\{a,b\}$.
\end{remark}

Applying lemma \ref{lem:rank_to_cost} to (\ref{eq:rank_const_SPI}), we can equivalently write it as
\begin{align}\label{eq:l0_rank_cost_SPI}
\min_{X_{11},X_{12}, K,N} &\|K\|_0+ \nu \mathbf{rank}\left[\begin{array}{cc} X_{11}& X_{12}\\I_{ n}& (KC)^{\text T} \end{array}\right] \\
\mbox{s.t.}\:\:\: &AX_{11}+X_{11}A^{\text T}+BX_{12}^{\text T}+X_{12}B^{\text T}+N=0, \notag\\
& \mathbf{diag}(X_{11},N) \succ 0, \notag\\
& K\in \mathcal{K},\notag
\end{align}
with $\nu >mn$. Note that the matrix $X_{11}$ is full rank due to its positive definiteness, therefore, all of the requirements of lemma \ref{lem:rank_to_cost} are satisfied. 

%
\begin{remark}
The solution to equation (\ref{eq:l0_rank_cost_SPI}) falls into the category of the problem of recovery of simultaneously structured models where the matrix of interest is both sparse and low-rank \cite{Oymak:2012,Chen:2014}. Oymak \emph{et al.}, in their recent paper, have shown that minimizing a combination of the known norm penalties corresponding to each structure (for example, $\ell_1$-norm for sparsity and nuclear norm for matrix rank) will not yield better results than an optimization exploiting only one of the structures. They have concluded that an entirely new convex relaxation is required in order to fully utilize both structures \cite{Oymak:2012}.
\end{remark}
Without loss of generality, the following theorem is stated assuming $m<n$.
%
\begin{theorem}
The optimization problem (\ref{eq:rank_const_SPI}), if feasible, is equivalent to 
\begin{align}\label{eq:rank_cost_SPI}
\min_{X_{11},X_{12},C, K,N,\varepsilon} &\mathbf{rank}(\mathbf{diag}[\mathbf{vec}(K),\Psi_1,\cdots,\Psi_\nu,\Phi_1,\cdots,\Phi_\rho])\\
\mbox{s.t.}\:\:\: &AX_{11}+X_{11}A^{\text T}+BX_{12}^{\text T}+X_{12}B^{\text T}+N=0, \notag\\
&K \in \mathcal{K}, \notag\\
&\varepsilon>0, \notag
\end{align}
where 
\begin{align*}
\begin{array}{ll}
\Psi_i=\left[\begin{array}{c|c}\begin{array}{cc} X_{11}& X_{12}\\I_{n}& (KC)^{\text T} \end{array}&0_{(2n \times (n-m))}\end{array}\right]& i=1,\cdots,\nu\\
\Phi_i=\left[\begin{array}{cc}I_{2n} &D\\ D^{\text T} & \mathbf{diag}(X_{11},N)-\varepsilon I_{2n}\end{array}\right]&i=1,\cdots,\rho
\end{array}
\end{align*}
and the parameters $\nu$ and $\rho$ are integers satisfying
\begin{align*}
\rho&>mn+\nu. \mathbf{max}\{2n,(n+m)\}\\
\nu &>mn
\end{align*}
\end{theorem}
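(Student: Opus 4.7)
The plan is to combine three structural observations: first, the rank of a block-diagonal matrix equals the sum of the ranks of its blocks; second, each block appearing in the displayed matrix encodes precisely one of the constraints of (\ref{eq:rank_const_SPI}); third, Lemma \ref{lem:rank_to_cost} can be invoked iteratively to fold rank constraints into the objective, provided the scalar multipliers are chosen large enough. Throughout, the auxiliary matrix $D$ and the scalar $\varepsilon$ are treated as additional decision variables that realize the required Cholesky factorization.

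I would first interpret each block. Since a diagonal matrix's rank equals the number of its nonzero diagonal entries, $\mathbf{rank}(\mathbf{diag}(\mathbf{vec}(K)))=\|K\|_0$. Each $\Psi_i$ is a $2n\times 2n$ matrix whose rank is at least $n$ owing to the $I_n$ submatrix, with equality precisely when the rank-$n$ condition in (\ref{eq:rank_const_SPI}) holds. Each $\Phi_i$ is $4n\times 4n$ and has rank at least $2n$ because of its leading $I_{2n}$ block; by a Schur-complement argument in the spirit of Mesbahi and Papavassilopoulos, $\mathbf{rank}(\Phi_i)=2n$ if and only if $\mathbf{diag}(X_{11},N)-\varepsilon I_{2n}=D^{\text T}D\succeq 0$. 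Combined with the constraint $\varepsilon>0$, this is equivalent to the strict positive-definiteness of $X_{11}$ and $N$, since $D$ can always be chosen as a Cholesky factor when feasible.

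The second step applies rank additivity on block-diagonal matrices to rewrite the objective of (\ref{eq:rank_cost_SPI}) as
\begin{equation*}
\|K\|_0+\nu\,\mathbf{rank}(\Psi)+\rho\,\mathbf{rank}(\Phi).
\end{equation*}
Lemma \ref{lem:rank_to_cost} is invoked twice. Its first application folds the rank-$n$ constraint on $\Psi$ into the cost with multiplier $\nu$; the lemma requires $\nu>mn\geq\|K\|_0^{\max}$, matching the stated hypothesis $\nu>mn$. Its second application folds the rank condition on $\Phi$---equivalently, the positive-definiteness of $X_{11}$ and $N$---into the cost with multiplier $\rho$. At this stage, the pre-existing cost is no longer a pure cardinality but $\|K\|_0+\nu\,\mathbf{rank}(\Psi)$, so the lemma demands that $\rho$ exceed the worst case of this combined quantity, namely $mn+\nu\,\mathbf{max}\{2n,n+m\}$, which matches the hypothesis on $\rho$.

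The principal technical obstacle is correctly tracking the bounds through this cascade: the second application of Lemma \ref{lem:rank_to_cost} must account for the already-inserted $\nu\,\mathbf{rank}(\Psi)$ term, which is exactly what produces the additive structure in the bound on $\rho$. For the converse inclusion---that any optimizer of (\ref{eq:rank_cost_SPI}) produces a feasible point of (\ref{eq:rank_const_SPI}) with the same cardinality---I would argue by contradiction: if at the optimum any $\Psi_i$ had rank strictly greater than $n$ or any $\Phi_i$ had rank strictly greater than $2n$, then a single unit of rank reduction in that block would save at least $\nu$ or $\rho$ in the objective, while the possible re-inflation of other terms is bounded above by $mn$ or by $mn+\nu\,\mathbf{max}\{2n,n+m\}$, respectively, contradicting optimality. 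Putting the two directions together yields the claimed equivalence of (\ref{eq:rank_const_SPI}) and (\ref{eq:rank_cost_SPI}).
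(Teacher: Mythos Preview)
Your proposal is correct and follows essentially the same route as the paper: decompose the block-diagonal rank into the sum $\|K\|_0+\nu\,\mathbf{rank}(\Psi)+\rho\,\mathbf{rank}(\Phi)$, use the Schur-complement rank characterization of positive semidefiniteness to encode $\mathbf{diag}(X_{11},N)\succ 0$ via $\Phi$, and then apply the argument of Lemma~\ref{lem:rank_to_cost} in two stages with the stated thresholds on $\nu$ and $\rho$. The paper's proof is terser---it simply cites the rank--PSD equivalence and says ``an argument similar to the one used in the proof of Lemma~\ref{lem:rank_to_cost}'' suffices---but your explicit tracking of the cascaded bounds and your contradiction argument for the converse direction fill in exactly the details the paper leaves implicit.
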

\begin{proof}
For a function that maps matrices into $q \times q$ symmetric matrices, positive semi-definiteness can be equivalently expressed as a rank constraint \cite{Recht:2007}
\begin{align}
f(X)\succeq 0 \Longleftrightarrow \mathbf{rank}\left[\begin{array}{cc}I_q &U\\ U^{\text T} & f(X)\end{array}\right]\leq q
\end{align}
for some $U\in \mathbb{R}^{q}$. Since $ \mathbf{diag}(X_{11},N) \succ0$ is equivalent to $ \mathbf{diag}(X_{11},N)  \succeq \varepsilon I_{2n}$ for some $\epsilon>0$, it can be written as the following rank constraint
\begin{align*}
\mathbf{rank}\left[\begin{array}{cc}I_{2n} &D\\ D^{\text T} &  \mathbf{diag}(X_{11},N) -\varepsilon I_{2n}\end{array}\right]=2n
\end{align*}
Noting that the cost function in (\ref{eq:l0_rank_cost_SPI}) is bounded by $mn+\nu.\mathbf{max}\{2n,(n+m)\}$, we can use an argument similar to the one used in the proof of lemma \ref{lem:rank_to_cost} to to show that (\ref{eq:rank_const_SPI}), if feasible, can be equivalently cast in the following form
\begin{align}\label{eq:l0_rank_rank_cost_SPI}
\min_{X_{11},X_{12},C,K,N}&\|K\|_0+ \nu \mathbf{rank}\left[\begin{array}{cc} X_{11}& X_{12}\\I_{n}& (KC)^{\text T} \end{array}\right]\notag\\
&\hspace{0.34in}+\rho  \mathbf{rank}\left[\begin{array}{cc}I_{2n} &D\\ D^{\text T} & M-\varepsilon I_{2n}\end{array}\right]\\
\mbox{s.t.}\:\:\: &AX_{11}+X_{11}A^{\text T}+BX_{12}^{\text T}+X_{12}B^{\text T}+N=0, \notag\\
&K \in \mathcal{K},\notag\\
&\varepsilon>0,\notag
\end{align}
where 
\begin{align*}
\nu&>mn\\
\rho&>mn+\nu. \mathbf{max}\{2n,(n+m)\}.
\end{align*}
Next, we are going to show that the cost function of (\ref{eq:l0_rank_rank_cost_SPI}) is equal to the cost function of (\ref{eq:rank_cost_SPI}) for $\rho$ and $\nu$ chosen to be integers satisfying the conditions. It can be easily verified that $\|K\|_0=\mathbf{rank}(\mathbf{diag}(\mathbf{vec}(K)))$, also, the ranks of the square matrices $\Psi_i$'s are equal to the rank of  $\left[\begin{smallmatrix} X_{11}& X_{12}\\I_{n}& (KC)^{\text T} \end{smallmatrix}\right]$.

If the parameters $\rho$ and $\nu$ are integers, we can construct a block diagonal matrix in the following form 
\begin{align*}
\mathbf{diag}[\mathbf{vec}(K),\Psi_1,\cdots,\Psi_\nu,\Phi_1,\cdots,\Phi_\rho]
\end{align*}
Thus, the rank of such matrix is equal to the sum of the rank of its constructing block matrices. Therefore, it is equal to the cost function of the optimization problem (\ref{eq:rank_cost_SPI}), which completes our proof.
\end{proof}
%
The above formulation is in the form of \emph{Affine Rank Minimization Problem} (ARMP), which consists of minimizing the rank of a matrix subject to affine/convex constraints with the general form
\begin{align*}
\min_{X}\:\:&\mathbf{rank}(X)\\
\mbox{s.t.}\:\:\: &\mathcal{A}(X)=b
\end{align*}
for a fixed infinitesimal $\varepsilon>0$. ARMP has been investigated thoroughly in the past decade and several heuristics have been proposed to solve it. For example, Recht \emph{et al.} in \cite{Recht:2007} showed that nuclear norm relaxation of rank can recover the minimum rank solution if certain property, namely Restricted Isometry Property (RIP), holds for the linear mapping. A family of Iterative Re-weighted Least Squares algorithms which minimize Schatten-p norm, i.e. $\|X\|_{S_p}=\mathbf{Tr}(X^{\text T}X+\gamma I)^{p/2}$, of the matrix as a surrogate for its rank is also introduced in \cite{Mohan:2012}. Singular Value Projection (SVP) algorithm is also guaranteed to recover the low rank solution for affine constraints which satisfy RIP  \cite{Meka:2009}.

\begin{figure}[t]
\centering
    \subfloat[\label{fig:syst_patt}]{%
      \includegraphics[trim = 35mm 10mm 30mm 10mm, clip, width=0.33\textwidth]{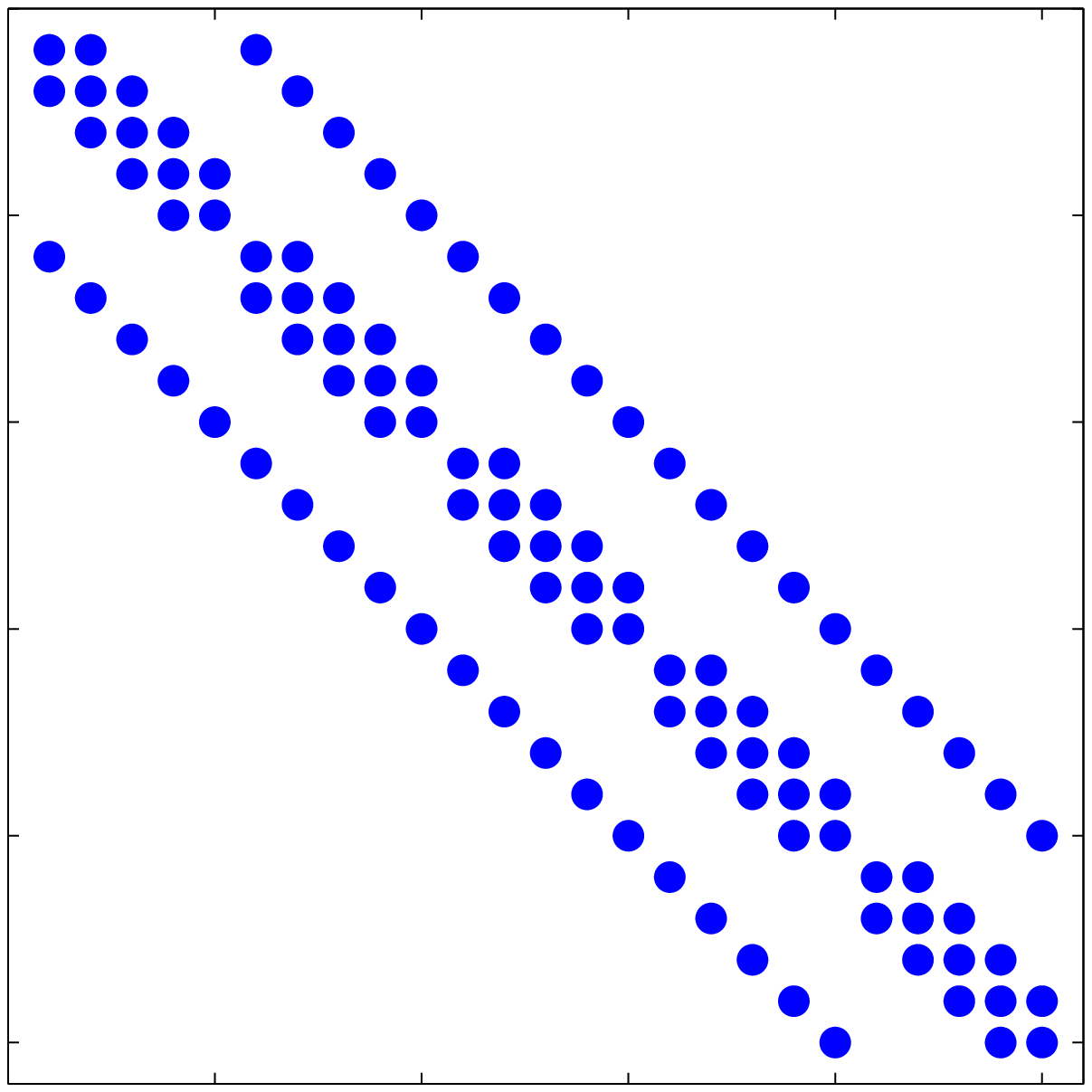} 
    }
    \subfloat[\label{fig:COFC_patt}]{%
      \includegraphics[trim = 35mm 10mm 30mm 10mm, clip,width=0.33\textwidth]{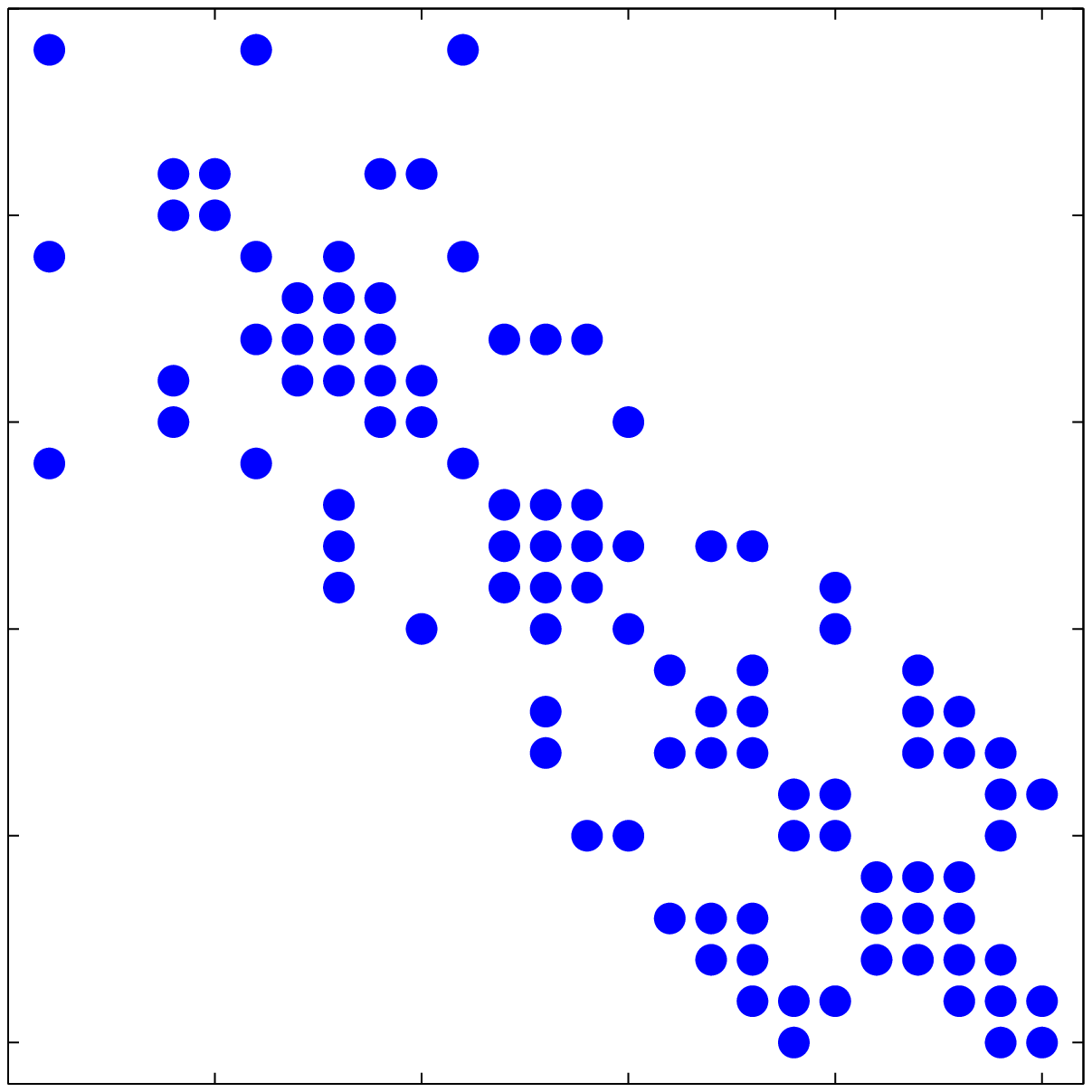}
      }\\
      \subfloat[\label{fig:COFC_graph}]{%
      \includegraphics[trim = 0mm 10mm 0mm 10mm, clip,width=0.45\textwidth]{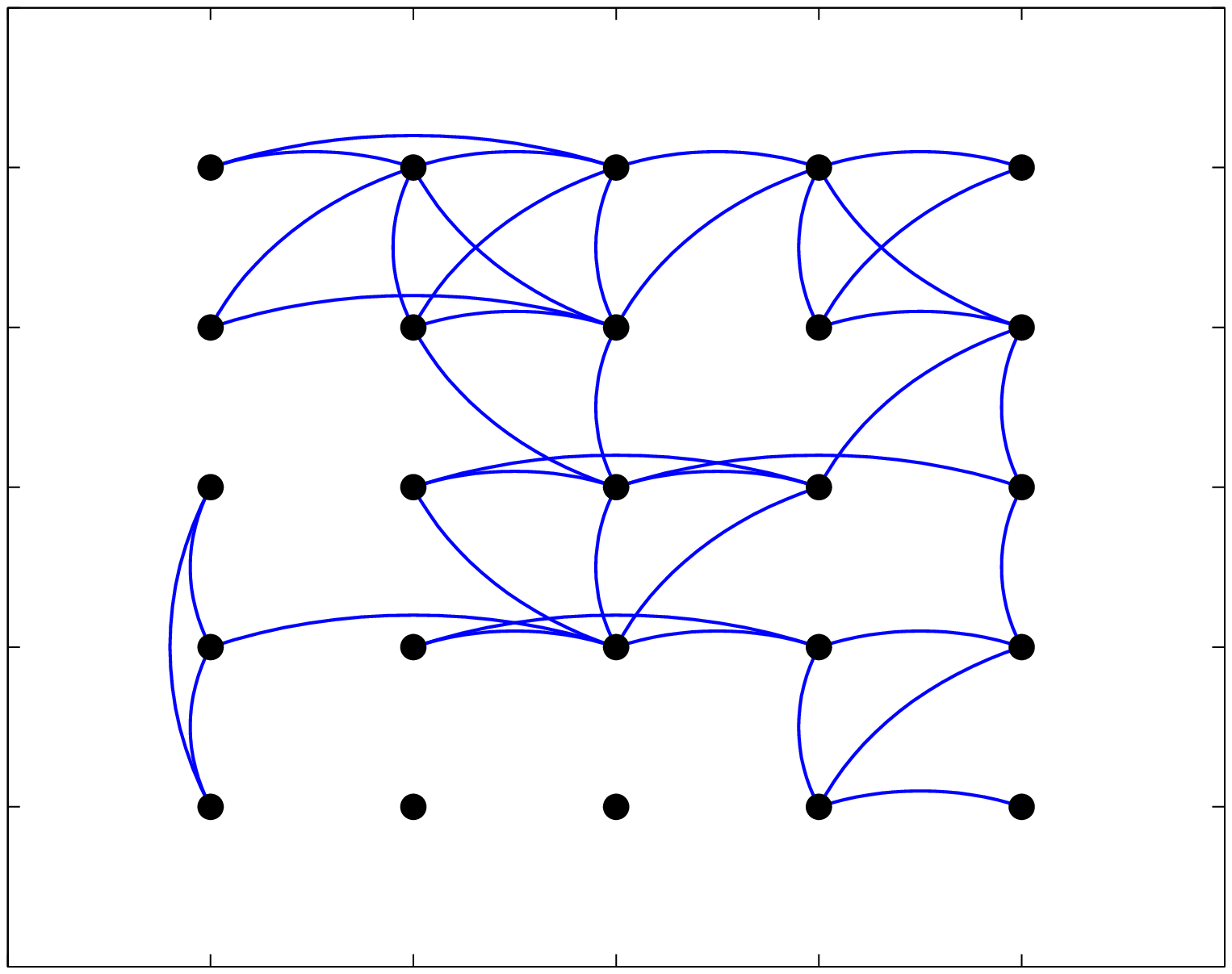}
      }
      \centering
      \caption{ Sparsity pattern of (a) the network system  (b) the optimal sparse feedback controller \{$\lambda=10$, $\rho=100$\}. (c) representation of the underlying graph of the sparse controller.  }
    \label{fig:A}
\end{figure}

\begin{remark}
The discrete-time counterpart of the optimization problem (\ref{eq:rank_const_SPI}) can be formulated as
 \begin{align}\label{eq:rank_const_SPI_disc}
\min_{X_{11},X_{12}, K,N} &\|K\|_0  \tag{P5}\\
\mbox{s.t.}\:\:\: &A^{\text T}X_{11}A+A^{\text T}X_{12}+X_{12}^{\text T}A+X_{22}-X_{11}+N=0, \notag\\
&X_{11}\succ 0,~N \succ 0,\notag\\
&Y^{\text T}=BKC,\notag\\
&K \in \mathcal{K}, \notag\\
&\mathbf{rank}\left[\begin{array}{cc} X_{11}& X_{12}\\X_{12}^{\text T} & X_{22}\\I_{ n}& Y^{\text T} \end{array}\right]=n. \notag
\end{align}
Hence, the results, developed in this section, are applicable to the problem of identifying the sparsest stabilizing controller for discrete-time linear time invariant systems.
\end{remark}
%
%
%

\section{Numerical Examples \label{nume-exam-sec}}
\begin{figure}[t!]
    \subfloat[\label{fig:H2_vs_Gamma}]{%
      \includegraphics[trim = 0mm 5mm 0mm 10mm, clip,width=0.45\textwidth]{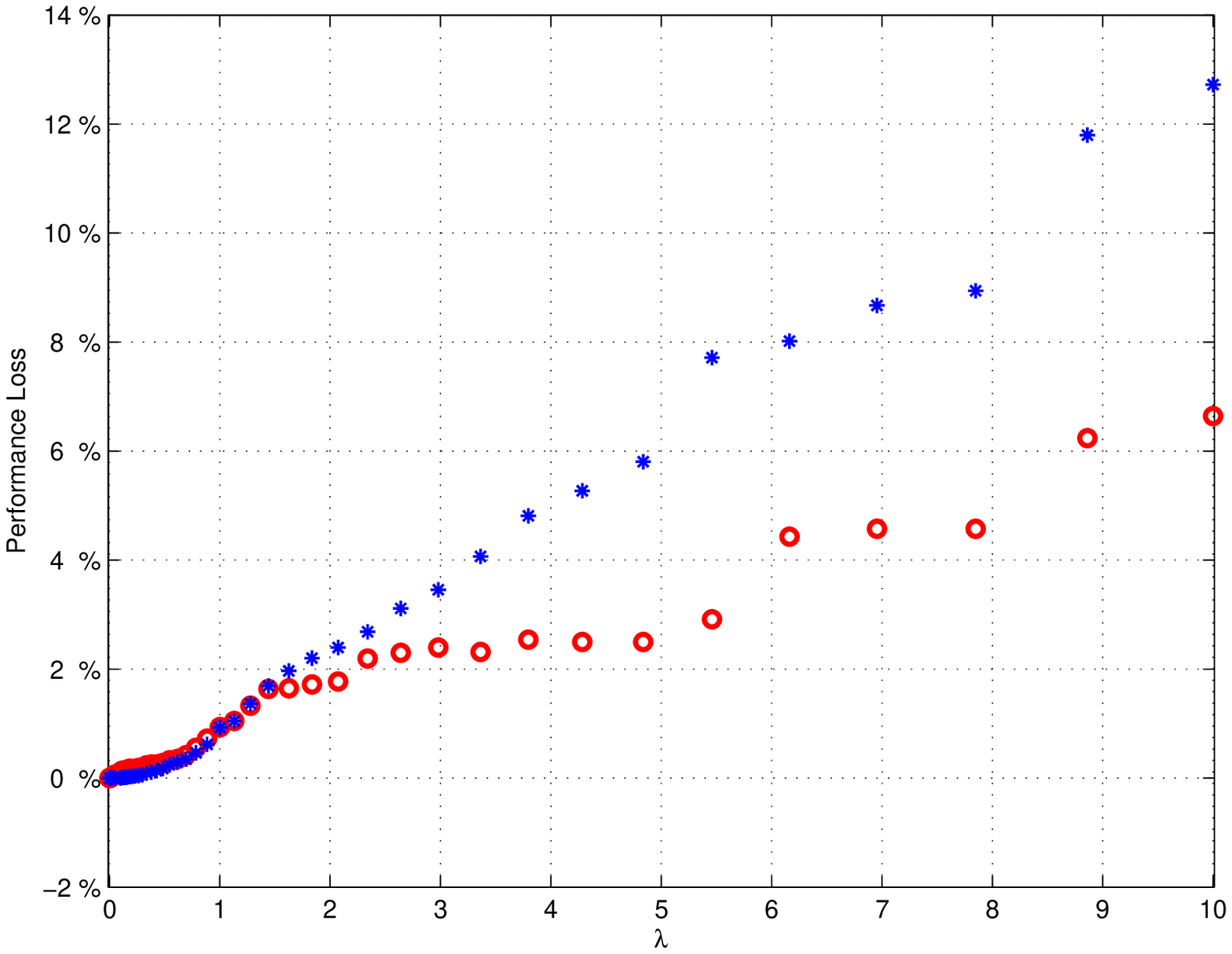}
    }
    \subfloat[\label{fig:NNZ_vs_Gamma}]{%
      \includegraphics[trim = 0mm 5mm 0mm 10mm, clip, width=0.45\textwidth]{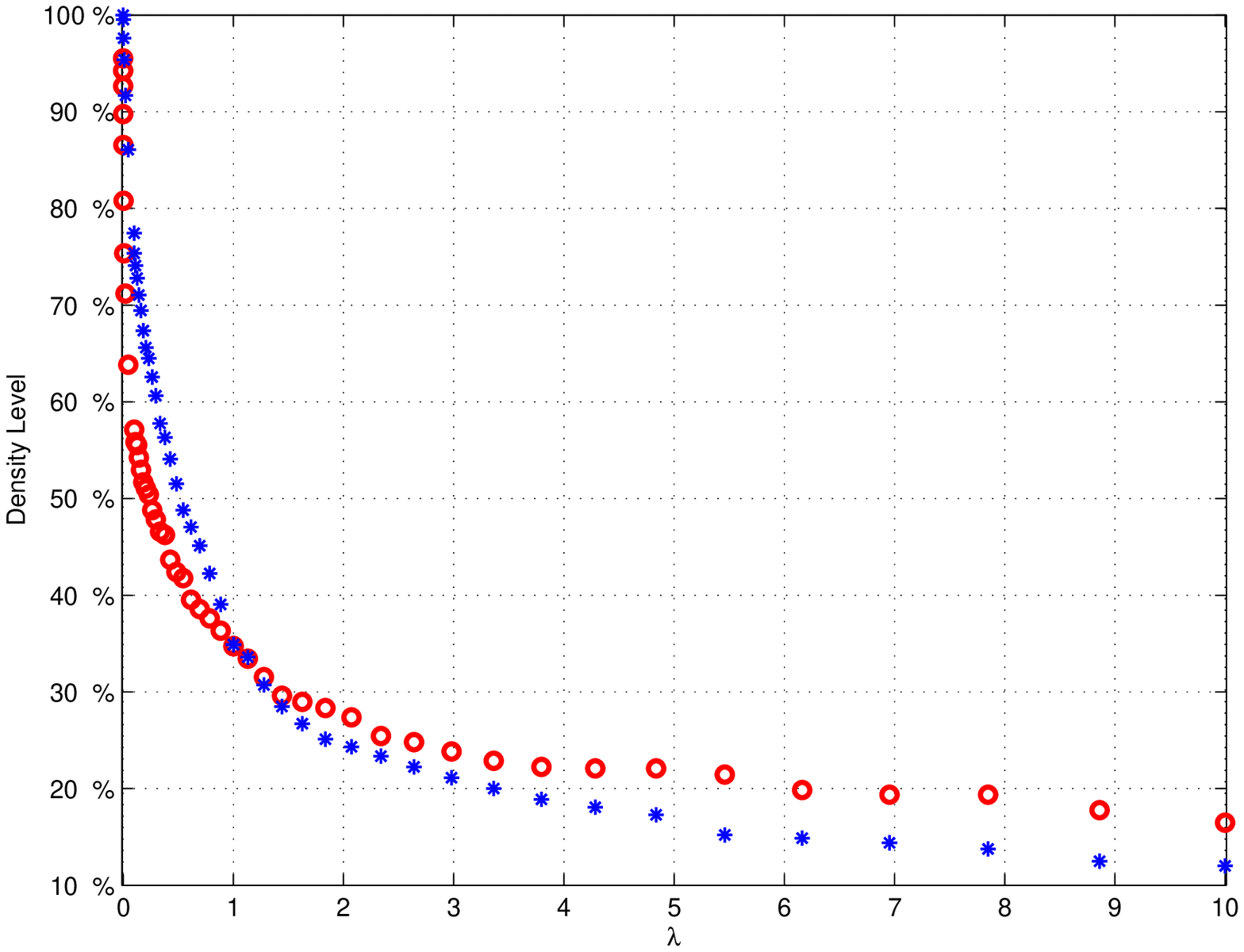}
      }
      \centering
      \caption{ (a) Percentage of optimal quadratic cost degradation relative to the $\text{LQR}$ optimal cost and (b) Density level of the controller gain for different values of $\lambda$, and for the two controller design approaches: SPOFC ($*$) and our proposed method ($\circ$)}
    \label{fig:B}
\end{figure}

In this section, we use several examples to demonstrate how our proposed rank constrained optimization approach can be exploited to solve the optimal sparse output feedback controller design problem considering the input/output constraints.
\subsection{Unstable Lattice Network System}
Here, we illustrate an example in which we design an optimal sparse state feedback controller for an unstable networked system with $25$ states defined on a $5\times 5$ lattice. The entries of its corresponding system matrix are randomly generated scalars drawn from the standard uniform distribution on the open interval $\left(-1,1\right)$, and it is assumed the state performance matrix $Q$ to be an identity matrix, while the control performance weight $R=10I$. Here, we used the traditional $\text{LQR}$ controller as the benchmark to measure the performance of our proposed algorithm. Performing standard $\text{LQR}$ design method, our results show that the optimal cost, for the case of $\text{LQR}$ control design, is $J^*=211.173$. 

Next, we applied Algorithm 1 to design an optimal sparse controller with the parameters values $\lambda=10$ and $\rho=100$, while keeping the performance weights unchanged. It can be observed that the optimal controller cost function increases to $J^*=230.6989$, which is about $9.2\%$ higher, comparing to that of the $\text{LQR}$ design. On the other hand, the number of non-zero entries of the controller gain drops to $97$, i.e. $\|K\|_0=97$. This means a major decrease in the number of non-zero entries of the controller gain. Figures \ref{fig:syst_patt} and \ref{fig:COFC_patt} show the sparsity structure of the system network and the obtained sparse controller. The figures basically visualize the controller matrix by using solid blue circles to represent the non-zero entries of the matrix and leaving the zero entries as blanks. In Figure \ref{fig:COFC_graph} the graph representation of the generated sparse controller is depicted.

Additionally, we present a brief case study that compares our approach with the Sparsity Promoting Optimal Feedback Control (SPOFC) method, proposed in \cite{Lin:2012,Lin:2013}. The SPOFC method essentially solves a different control problem, since it solves the $\mathcal{H}_2$ problem, modified by adding a sparsity promoting penalty function to its cost function and obtain a sub-optimal sparse state feedback controller, while our proposed approach is built upon adjusting the $\text{LQR}$ problem to achieve a sparse {\em output} feedback controller. Moreover, the approach in SPOFC algorithm fails to directly incorporate the norms bounds on the inputs/outputs and the controller predefined structure. Nonetheless, for comparison purposes and demonstrating the comparable performance of our method, we have obtained the MATLAB source code for SPOFC from the website \emph{www.ece.umn.edu/mihailo/sofware/lqrsp}, and applied both our method and SPOFC to design sparse state feedback controllers for the randomly generated system. Fig. \ref{fig:B} depicts the results of the simulations performed using both controller design methods. As predicted, the quadratic cost of the closed loop system increases, as the the parameter $\lambda$ becomes larger. Moreover, increasing the value of this parameters on the system promotes the sparsity level of feedback gain matrix. Figure \ref{fig:B} depicts the effect of the parameter $\lambda$ on the performance of the closed loop system and the number of non-zero entries of the controller gain. In Fig. \ref{fig:H2_vs_Gamma} the $Y$-axis represents percentage of the performance loss, which is defined as $(J^*-J^*_{\text {LQR}})/J^*_{\text{LQR}}$. The density level percentage of the controller gain is also shown in Fig. \ref{fig:NNZ_vs_Gamma} when the parameter $\lambda$ varies from $10^{-3}$ to $10$. 

The simulation results, demonstrated in figure \ref{fig:B}, show that the SPOFC approach compromises the performance for a sparser controller in comparison to the our method. Our proposed method assures less performance loss by obtaining denser feedback controller. The disagreement between the optimal solutions of the two algorithms is mainly due to convergence to different local optima. It should also be noted the optimization parameter $\rho$ plays an important, but different, role in adjusting the convergence properties in both of the methods. Hence, setting the parameter $\rho$ to the same value in both optimizations may not be the most accurate choice for the comparison purposes. Moreover, the choice of the system also affects the design performance of both methods. Overall, our extensive simulation results suggest the comparable performance of both approaches. Considering the fact that our problem formulation and solving procedure, which is completely different from the preceding method, generates roughly the same sparse controller, it can be concluded that the derived sparse controller is likely to be the best we can obtain.  

\subsection{Sub-exponentially Spatially Decaying System}\label{subsec:SSDS}
\begin{figure}[t]
    \subfloat[\label{fig:performance3D}]{%
      \includegraphics[trim = 0mm 5mm 0mm 10mm, clip,width=0.45\textwidth]{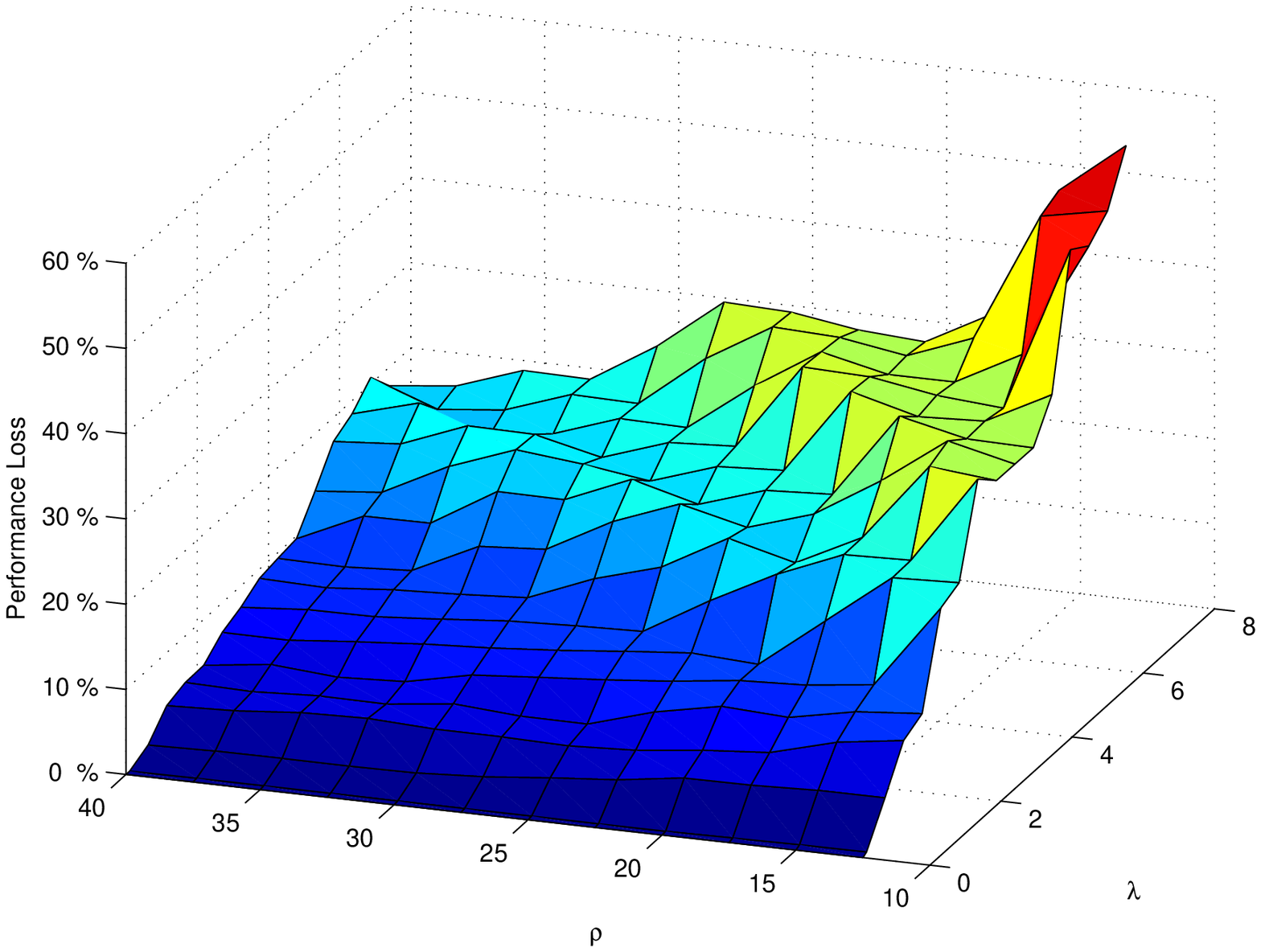}
    }
    \subfloat[\label{fig:density3D}]{%
      \includegraphics[trim = 0mm 5mm 0mm 10mm, clip, width=0.45\textwidth]{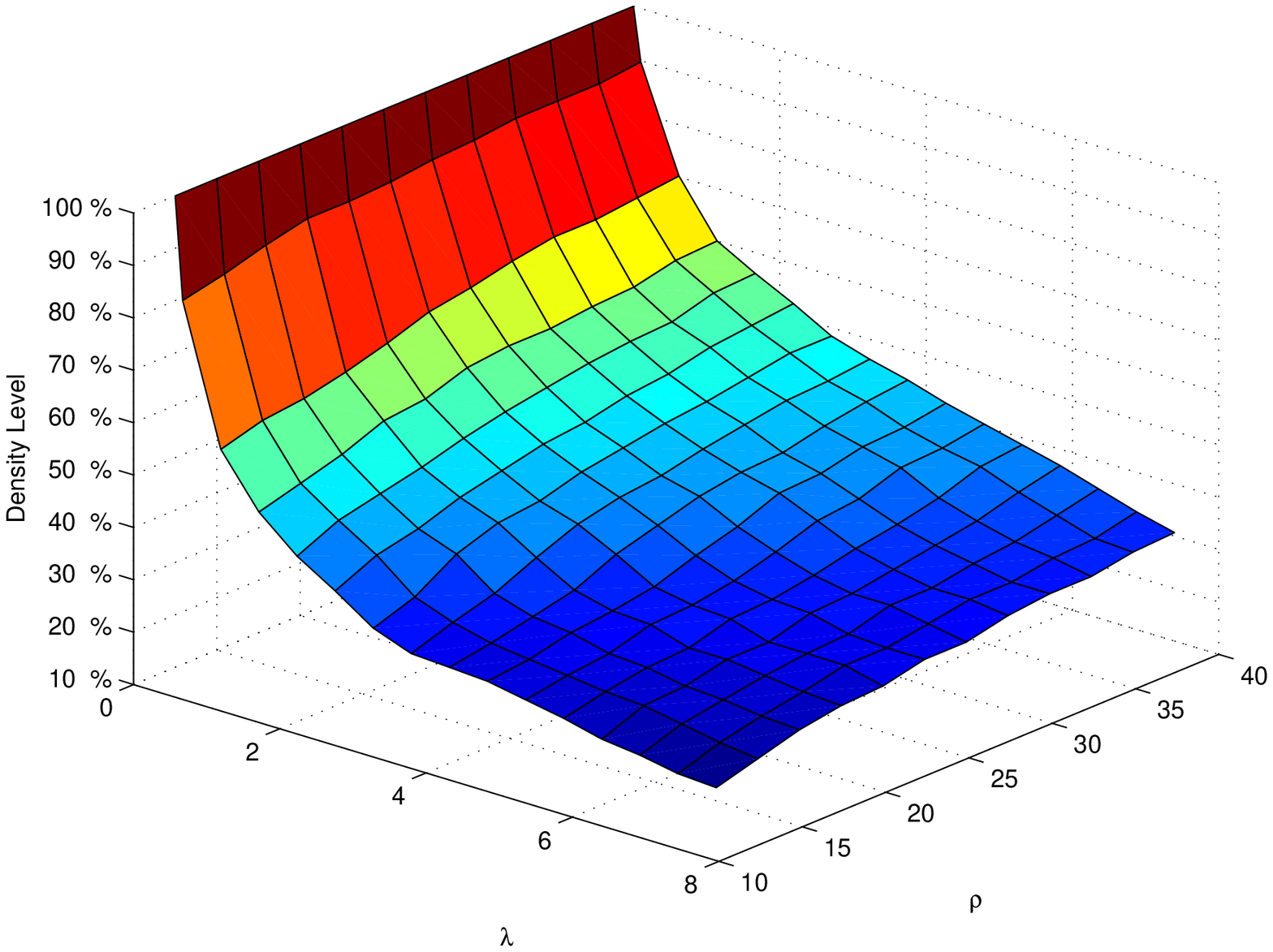}
      }
      \hfill
    \subfloat[\label{fig:DensityPerformance}]{%
      \includegraphics[trim = 0mm 5mm 0mm 10mm, clip, width=0.45\textwidth]{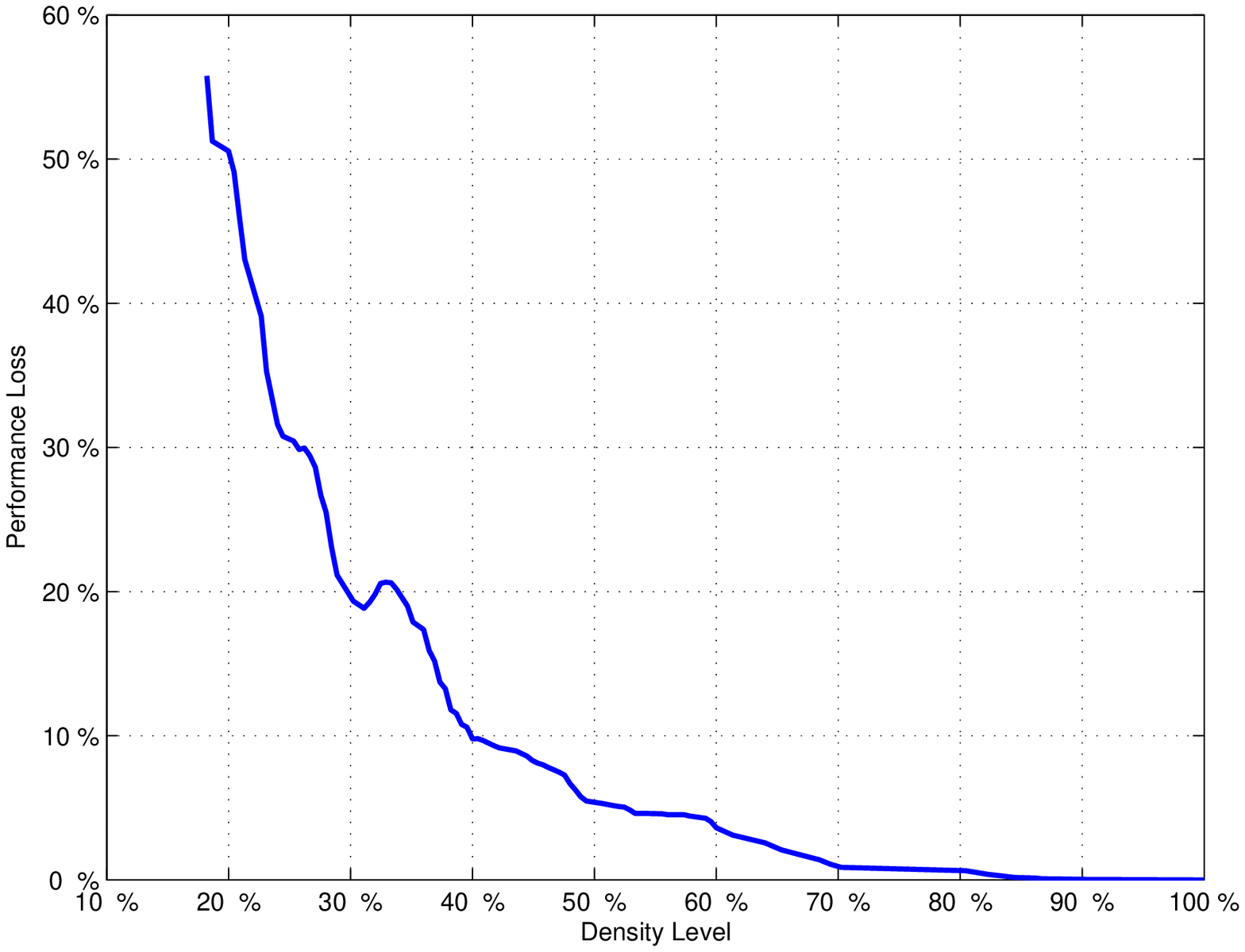}
      }
      \centering
      \caption{The characteristics of the sparse controller designed for a randomly generated spatially decaying system with parameters values \{$C_A=10$, $C_B=2$, $\alpha_A=1$, $\alpha_B=0.4$, $\beta_A=3$, and $\beta_B=0.9$\}. (a) Performance loss vs. $\rho$ and $\lambda$, (b) Density level vs. $\rho$ and $\lambda$ (c) Density level vs. controller performance degradation}
    \label{fig:C}
\end{figure}
To study the effects of parameters $\lambda$ and $\rho$ on the performance of our proposed method, we have run extensive simulations on a randomly generated sub-exponentially spatially decaying system \cite{Motee:2014}. In such systems, it is assumed the entries of the system matrices decay as they get further from the diagonal, thus we define the matrices $A=[a_{ij}]$ and $B=[b_{ij}]$ as 
\begin{align*}
\left\{\begin{array}{l}
a_{ij}=C_A \mathfrak{a}~ e^{-\alpha_A|i-j|^{\beta_A}}\\
b_{ij}=C_B \mathfrak{b}~e^{-\alpha_B|i-j|^{\beta_B}}
\end{array}\right.
\end{align*}
where $\mathfrak{a}$ and $\mathfrak{b}$ are uniformly distributed random variables on the open interval $\left(-1,1\right)$. By employing Algorithm 1 till the rank constraint is satisfied, we have depicted the performance degradation and density level of the generated controllers in figure \ref{fig:C} for different values of $\rho$ and $\lambda$. Although the proposed algorithm has converged for all choices of parameters in this simulation, It seems that the choice of the optimization parameter $\rho$ is needed to be at least one order of magnitude larger than the parameter $\lambda$ in order to guarantee the convergence to a proper sub-optimal minimum. In addition, since the main objective in designing a sparse controller is to obtain a controller with minimum number of nonzero entries and lowest performance decline, we have also presented the plot of the lowest performance loss obtained for particular values of density level in figure \ref{fig:DensityPerformance}. As expected, it can be observed the performance loss grows as the sprasity level of the controller increases.

\subsection{Optimal Sparse Controller with Upper Bound Imposed on the Control Input Norm}
\begin{figure}[t]
\centering
    \subfloat[\label{fig:struc_boundless}]{%
      \includegraphics[trim = 35mm 10mm 30mm 10mm, clip, width=0.33\textwidth]{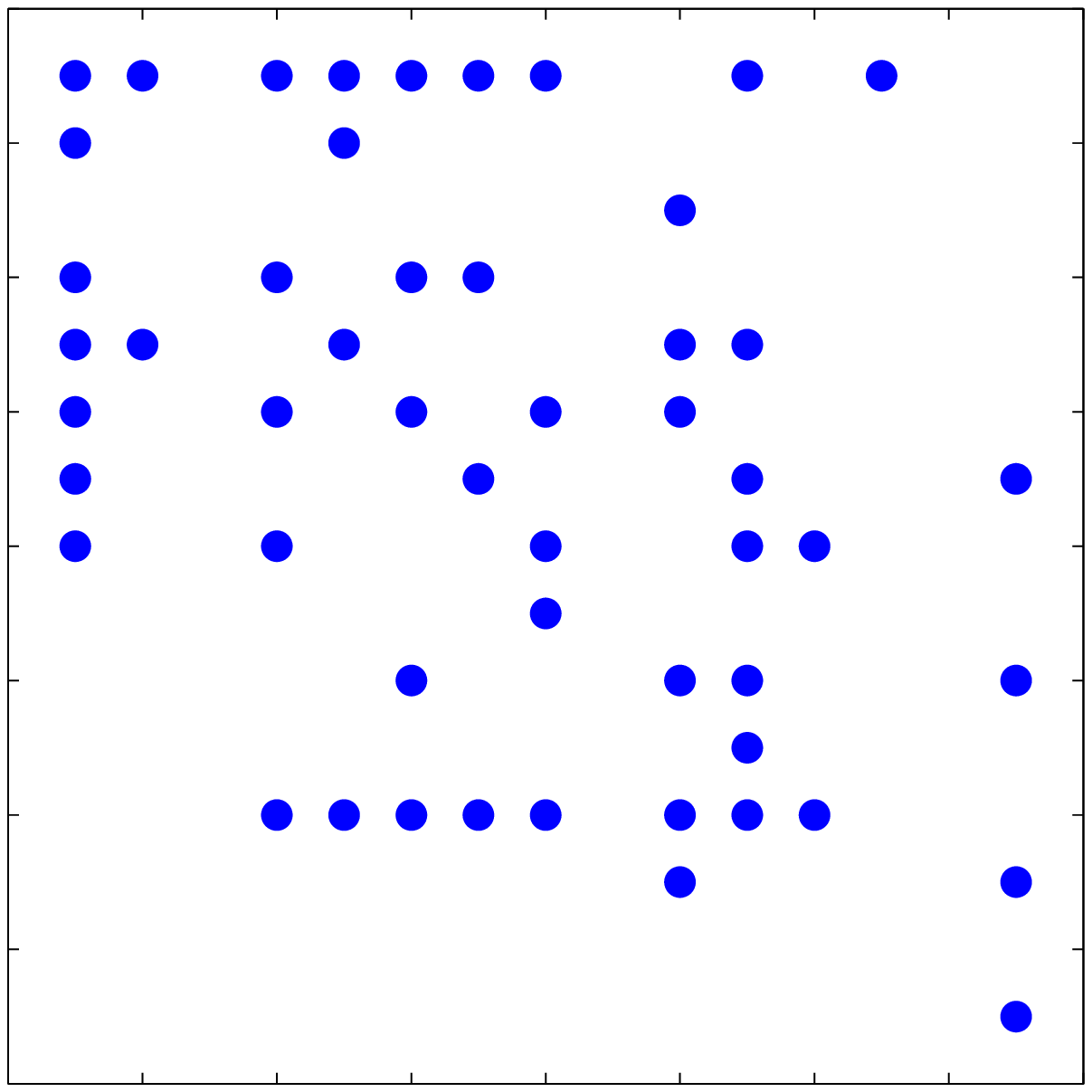} 
    }
    \subfloat[\label{fig:struc_bound}]{%
      \includegraphics[trim = 35mm 10mm 30mm 10mm, clip,width=0.33\textwidth]{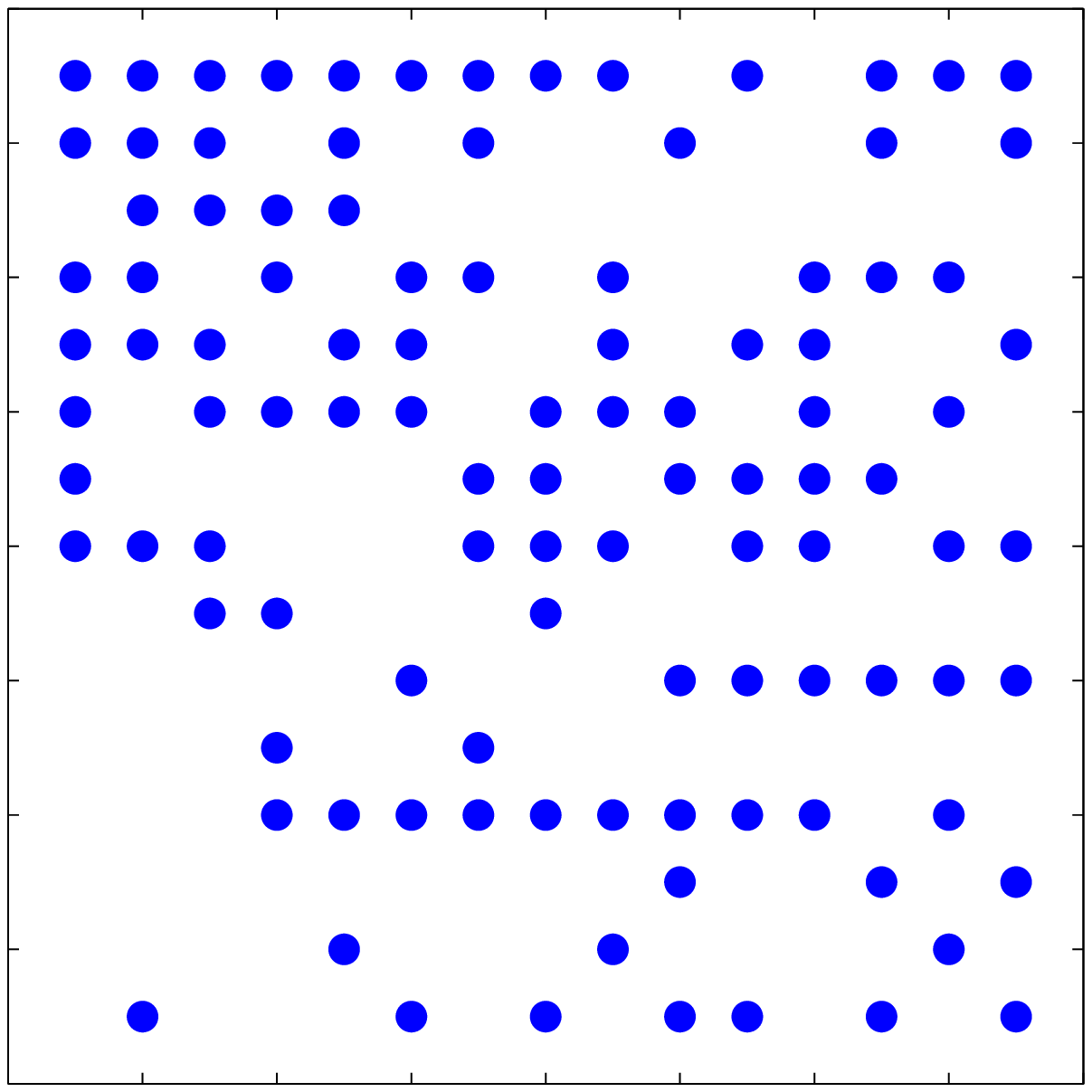}
      }
      \centering
      \caption{ Sparsity pattern of (a) optimal sparse feedback control with no bound on the control input  (b) the optimal sparse feedback controller with upper bound imposed on the system norm of the control input. Design Parameters for both figures are $Q=I$, $R=10I$, $\lambda=10$ and $\rho=100$. }
    \label{fig:D}
\end{figure}
In this example, we illustrate the effect of bounding the norm of the control input on the sparsity of the controller matrix. Considering a randomly generated $16 \times 16$ sub-exponential spatially decaying system, with the same parameter values used in section \ref{subsec:SSDS}, we first designed a sparse controller with no constraint on the control input. Our results show that the controllers number of nonzero entries and its performance loss, with respect to the cost of the $\text{LQR}$ controller which is $639.1912$, are $55$ and $9.3\%$ respectively. It is also observed that for the generated controller, we have $\|u\|_{L^{2}_{\infty}(\mathbb{R}^m)}=228.66$.

We then redesigned the controller, using the \emph{re-weighted $\ell_1$ minimization} method, by containing its control input norm in the interval $\left[0,200\right]$, and obtained controller has the following characteristics: $\|K\|_0=105$ and $J=737.16$. Although we bounded the control input norm to an approximately $10\%$ lower value, the obtained controller demonstrates $50\%$ less sparse pattern and  $6\%$ higher performance loss. The simulations results, depicted in figure \ref{fig:D}, not only verifies the capability of our method to incorporate bounds on the control input, as well as the system output, but also reveals the adverse impact of sparsifying the controller matrix on the control input norm.
%
%
%
\section{CONCLUSIONS \label{conc-futu-sec}}
In this paper, We have proposed a new framework for optimal sparse output feedback control design, which is capable of incorporating structural constraints on the feedback gain matrix as well as norm bounds on the inputs/outputs of the system. We have shown that problem can be converted to a rank constrained optimization problem with no other non-convex constraints. Using the proposed formulation, we have presented an optimization problem which yields an upper bound for the optimal value of the optimal sparse state feedback control problem. Exploiting the relaxation the $\ell_0$-norm with the $\ell_1$-norm, We have also expressed that local optimum of the relaxed optimization problem, in its general form, can be obtained by performing ADMM algorithm, which is, in essence, iteratively solving the relaxed problem and projecting its solution to the space of matrices with rank $n$. For the special case, where the objective is merely sparsity pattern recognition of the controller gain, we have demonstrated that the problem can be reduced to an Affine Rank Minimization. The simulation results are also provided to illustrate the utility and performance of our proposed approach. As compared to the results of \cite{Lin:2013}, our results show that while our proposed method has the advantage of performing the output feedback control design restricted by various forms of nonlinear constraints, the performance of our approach is on a par with theirs when applied to the regular sparse state feedback controller design problem.

%
%
%
%

\bibliographystyle{IEEEtran}
\bibliography{Arasbib}

\begin{thebibliography}{10}
\providecommand{\url}[1]{#1}
\csname url@samestyle\endcsname
\providecommand{\newblock}{\relax}
\providecommand{\bibinfo}[2]{#2}
\providecommand{\BIBentrySTDinterwordspacing}{\spaceskip=0pt\relax}
\providecommand{\BIBentryALTinterwordstretchfactor}{4}
\providecommand{\BIBentryALTinterwordspacing}{\spaceskip=\fontdimen2\font plus
\BIBentryALTinterwordstretchfactor\fontdimen3\font minus
  \fontdimen4\font\relax}
\providecommand{\BIBforeignlanguage}[2]{{%
\expandafter\ifx\csname l@#1\endcsname\relax
\typeout{** WARNING: IEEEtran.bst: No hyphenation pattern has been}%
\typeout{** loaded for the language `#1'. Using the pattern for}%
\typeout{** the default language instead.}%
\else
\language=\csname l@#1\endcsname
\fi
#2}}
\providecommand{\BIBdecl}{\relax}
\BIBdecl

\bibitem{Blondel:1997}
V.~Blondel and J.~N. Tsitsiklis, ``{NP}-hardness of some linear control design
  problems,'' \emph{SIAM Journal on Control and Optimization}, vol.~35, pp.
  2118 -- 2127, 1997.

\bibitem{Schuler:2011}
S.~Schuler, P.~Li, J.~Lam, and F.~Allg\"{o}wer, ``Design of structured dynamic
  output-feedback controllers for interconnected systems,'' \emph{International
  Journal of Control}, vol.~84, no.~12, pp. 2081--2091, 2011.

\bibitem{Polyak:2013}
B.~Polyak, M.~Khlebnikov, and P.~Shcherbakov, ``An {LMI} approach to structured
  sparse feedback design in linear control systems,'' in \emph{Proceeding of
  the 2013 European Control Conference}, July 2013, pp. 833--838.

\bibitem{Motee:2009}
N.~Motee and A.~Jadbabaie, ``Distributed multi-parametric quadratic
  programming,'' \emph{IEEE Transactions on Automatic Control}, vol.~54,
  no.~10, pp. 2279--2289, Oct 2009.

\bibitem{Lin:2011}
F.~Lin, M.~Fardad, and M.~Jovanov\'ic, ``Augmented {L}agrangian approach to
  design of structured optimal state feedback gains,'' \emph{IEEE Transactions
  on Automatic Control}, vol.~56, no.~12, pp. 2923--2929, Dec 2011.

\bibitem{Krishna:2013}
\BIBentryALTinterwordspacing
K.~Dvijotham, E.~Todorov, and M.~Fazel, ``Convex structured controller
  design,'' \emph{CoRR}, vol. abs/1309.7731, 2013. [Online]. Available:
  \url{http://arxiv.org/abs/1309.7731}
\BIBentrySTDinterwordspacing

\bibitem{Bamieh:2002}
B.~Bamieh, F.~Paganini, and M.~A. Dahleh, ``Distributed control of
  spatially-invariant systems,'' \emph{IEEE Transactions on Automatic Control},
  vol.~47, pp. 1091--1107, 2002.

\bibitem{Bamieh:2005}
B.~Bamieh and P.~Voulgaris, ``A convex characterization of distributed control
  problems in spatially invariant systems with communication constraints,''
  \emph{Systems and Control Letters}, vol.~54, pp. 575--583, 2005.

\bibitem{Motee:2008}
N.~Motee and A.~Jadbabaie, ``Optimal control of spatially distributed
  systems,'' \emph{IEEE Transactions on Automatic Control}, vol.~53, no.~7, pp.
  1616--1629, Aug 2008.

\bibitem{Lavaei:2013}
J.~Lavaei, ``Optimal decentralized control problem as a rank-constrained
  optimization,'' in \emph{2013 51st Annual Allerton Conference on
  Communication, Control, and Computing (Allerton)}, Oct 2013, pp. 39--45.

\bibitem{Fazelnia:2014}
G.~Fazelnia, R.~Madani, A.~Kalbat, and J.~Lavaei, ``Convex relaxation for
  optimal distributed control problem -- {Part I}: {T}ime-domain formulation,''
  \emph{optimization}, vol.~16, p.~17.

\bibitem{Madani:2014}
R.~Madani, G.~Fazelnia, S.~Sojoudi, and J.~Lavaei, ``Low-rank solutions of
  matrix inequalities with applications to polynomial optimization and matrix
  completion problems,'' \emph{Proceedings of the 2014 IEEE Conference on
  Decision and Control}, 2014.

\bibitem{Wang:2014}
Y.~S. Wang, N.~Matni, and J.~C. Doyle, ``Localized {LQR} optimal control,''
  \emph{CoRR}, vol. abs/1409.6404, 2014.

\bibitem{Wang:2015}
Y.-S. Wang and N.~Matni, ``Localized lqg optimal control for large-scale
  systems,'' \emph{submitted to 2015 54th IEEE Conference on Decision and
  Control (CDC)}, 2015.

\bibitem{Rotkowitz:2006}
M.~Rotkowitz and S.~Lall, ``A characterization of convex problems in
  decentralized control,'' \emph{IEEE Transactions on Automatic Control},
  vol.~51, no.~2, pp. 274--286, Feb 2006.

\bibitem{Rotkowitz:2011}
M.~Rotkowitz, ``Parametrization of stabilizing controllers subject to subspace
  constraints,'' in \emph{American Control Conference (ACC), 2011}, June 2011,
  pp. 5370--5375.

\bibitem{XinQi:2004}
X.~Qi, M.~Salapaka, P.~Voulgaris, and M.~Khammash, ``Structured optimal and
  robust control with multiple criteria: {A} convex solution,'' \emph{IEEE
  Transactions on Automatic Control}, vol.~49, no.~10, pp. 1623--1640, Oct
  2004.

\bibitem{Donoho:2006}
D.~Donoho, ``Compressed sensing,'' \emph{IEEE Transactions on Information
  Theory}, vol.~52, no.~4, pp. 1289--1306, April 2006.

\bibitem{DeVore:2007}
R.~A. DeVore, ``Deterministic constructions of compressed sensing matrices,''
  \emph{Journal of Complexity}, vol.~23, no. 4–6, pp. 918 -- 925, 2007.

\bibitem{Zahedi:2013}
R.~Zahedi, L.~Krakow, E.~Chong, and A.~Pezeshki, ``Adaptive estimation of
  time-varying sparse signals,'' \emph{IEEE Access}, vol.~1, pp. 449--464,
  2013.

\bibitem{Zahedi:2012}
R.~Zahedi, A.~Pezeshki, and E.~K. Chong, ``Measurement design for detecting
  sparse signals,'' \emph{Physical Communication}, vol.~5, no.~2, pp. 64 -- 75,
  2012, compressive Sensing in Communications.

\bibitem{Davenport:2010}
M.~Davenport, P.~Boufounos, M.~Wakin, and R.~Baraniuk, ``Signal processing with
  compressive measurements,'' \emph{IEEE Journal of Selected Topics in Signal
  Processing}, vol.~4, no.~2, pp. 445--460, April 2010.

\bibitem{Candes:2004}
E.~Candes and T.~Tao, ``Decoding by linear programming,'' \emph{Information
  Theory, IEEE Transactions on}, vol.~51, no.~12, pp. 4203--4215, Dec 2005.

\bibitem{Candes:2005}
E.~J. Candès, J.~K. Romberg, and T.~Tao, ``Stable signal recovery from
  incomplete and inaccurate measurements,'' \emph{Communications on Pure and
  Applied Mathematics}, vol.~59, no.~8, pp. 1207--1223, 2006.

\bibitem{Candes:2008}
E.~Candes, M.~Wakin, and S.~Boyd, ``Enhancing sparsity by reweighted $\ell_1$
  minimization,'' \emph{Journal of Fourier Analysis and Applications}, vol.~14,
  pp. 877--905, July 2008.

\bibitem{Lin:2013}
F.~Lin, M.~Fardad, and M.~R. Jovanovi\'c, ``Design of optimal sparse feedback
  gains via the {A}lternating {D}irection {M}ethod of {M}ultipliers,''
  \emph{IEEE Transactions on Automatic Control}, vol.~58, no.~9, pp.
  2426--2431, 2013.

\bibitem{Schuler:2011c}
S.~Schuler, C.~Ebembauer, and F.~Allg\"{o}wer, ``$\ell_0$-system gain and
  $\ell_1$-optimal control,'' in \emph{Preprints of the 18th IFAC World
  Congress}, August 2011, pp. 9230--9235.

\bibitem{Arastoo:2012}
V.~V. Kulkarni, R.~Arastoo, A.~Bhat, K.~Subramanian, M.~V. Kothare, and M.~C.
  Riedel, ``Gene regulatory network modeling using literature curated and high
  throughput data,'' \emph{Systems and Synthetic Biology}, vol.~6, no. 3-4, pp.
  69--77, December 2012.

\bibitem{Chartrand:2007}
R.~Chartrand, ``Exact reconstruction of sparse signals via nonconvex
  minimization,'' \emph{Signal Processing Letters, IEEE}, vol.~14, no.~10, pp.
  707--710, Oct 2007.

\bibitem{Lai:2011}
M.~Lai and J.~Wang, ``An unconstrained $\ell_q$ minimization with $0<q\leq1$
  for sparse solution of underdetermined linear systems,'' \emph{SIAM Journal
  on Optimization}, vol.~21, no.~1, pp. 82--101, 2011.

\bibitem{Motee:2013}
N.~Motee and Q.~Sun, ``Measuring sparsity in spatially interconnected
  systems,'' in \emph{2013 IEEE 52nd Annual Conference on Decision and
  Control}, Dec 2013, pp. 1520--1525.

\bibitem{Motee:2014}
------, ``Sparsity and spatial localization measures for spatially distributed
  systems,'' \emph{arXiv preprint arXiv:1402.4148}, 2014.

\bibitem{Motee:2014ACC}
------, ``Sparsity measures for spatially decaying systems,'' in
  \emph{Proceedings of the 2014 American Control Conference}, June 2014, pp.
  5459--5464.

\bibitem{Recht:2010}
B.~Recht, W.~Xu, and B.~Hassibi, ``Necesary and sufficient conditions for
  success of the nuclear norm heuristic for rank minimization,'' in
  \emph{Proceedings of the 47$^{\rm th}$ IEEE Conference on Decision and
  Control}.\hskip 1em plus 0.5em minus 0.4em\relax Cancun, Mexico, December
  2008, pp. 180--185.

\bibitem{Recht:2011}
------, ``Null space conditions and thresholds for rank minimization,''
  \emph{Mathematical Programming}, vol. 127, pp. 175--211, July 2011.

\bibitem{Mesbahi:1997}
M.~Mesbahi and G.~P. Papavassilopolous, ``On the rank minimization problem over
  a positive semidefinite {L}inear {M}atrix {I}nequality,'' \emph{IEEE
  Transacation on Automatic Control}, vol.~42, pp. 1239--243, February 1997],.

\bibitem{Recht:2007}
B.~Recht, M.~Fazel, and P.~A. Parillo, ``Guaranteed minimum rank solutions to
  {L}inear {M}atrix {E}quations via nuclear norm minimization,'' \emph{SIAM
  Review}, vol.~52, pp. 471--501, July 2007.

\bibitem{Feron:1992}
E.~Feron, V.~Balakrishnan, S.~Boyd, and L.~El~Ghaoui, ``Numerical methods for
  $\mathcal{H}_2$ related problems,'' in \emph{Proceedings of the 1992 American
  Control Conference}, June 1992, pp. 2921--2922.

\bibitem{Dattoro:2005}
J.~Dattoro, \emph{Convex Optimization \& Euclidean Distance Geometry}.\hskip
  1em plus 0.5em minus 0.4em\relax Meboo Publishing, 2005.

\bibitem{Delgado:2014}
R.~A. Delgado, J.~C. Ag{\"u}ero, and G.~C. Goodwin, ``A rank-constrained
  optimization approach: Application to factor analysis,'' \emph{19th IFAC
  World Congress}, 2014.

\bibitem{Gorski:2007}
``Biconvex sets and optimization with biconvex functions: a survey and
  extensions,'' \emph{Mathematical Methods of Operations Research}, vol.~66,
  no.~3, 2007.

\bibitem{Boyd:1994}
S.~Boyd, L.~El~Ghaoui, E.~Feron, and V.~Balakrishnan, \emph{Linear matrix
  inequalities in system and control theory}.\hskip 1em plus 0.5em minus
  0.4em\relax SIAM, 1994, vol.~15.

\bibitem{Candes:2006}
E.~J. Candes, ``Compressive sampling,'' in \emph{International Congress of
  Mathematicians}, vol. III.\hskip 1em plus 0.5em minus 0.4em\relax Zurich:
  Eur.~Math.~Soc., December 2006, pp. 1433--1452.

\bibitem{Yu:2011}
H.~Yu and V.~K.~N. Lau, ``Rank-constrained {S}chur-convex optimization with
  multiple trace/log-det constraints,'' \emph{IEEE Transacation on Signal
  Processing}, vol.~59, pp. 304--314, 2011.

\bibitem{Goemans:1995}
M.~X. Goemans and D.~Williamson, ``Improved approximation algorithms for
  maximum cut and satisfiability problems using semidefinite programming,''
  \emph{J. ACM}, vol.~42, no.~6, pp. 1115--1145, Nov. 1995.

\bibitem{Orsi:2006}
``A newton-like method for solving rank constrained linear matrix
  inequalities,'' \emph{Automatica}, vol.~42, no.~11, pp. 1875 -- 1882, 2006.

\bibitem{Glowinski:1975}
M.~A. Glowinski, R., ``\BIBforeignlanguage{fre}{Sur l'approximation, par
  éléments finis d'ordre un, et la résolution, par pénalisation-dualité
  d'une classe de problèmes de dirichlet non linéaires},''
  \emph{\BIBforeignlanguage{fre}{ESAIM: Mathematical Modelling and Numerical
  Analysis - Modélisation Mathématique et Analyse Numérique}}, vol.~9,
  no.~R2, pp. 41--76, 1975.

\bibitem{Gabay:1976}
D.~Gabay and B.~Mercier, ``A dual algorithm for the solution of nonlinear
  variational problems via finite element approximation,'' \emph{Computers \&
  Mathematics with Applications}, vol.~2, no.~1, pp. 17 -- 40, 1976.

\bibitem{Boyd:2011}
S.~Boyd, N.~Parikh, E.~Chu, B.~Peleato, and J.~Eckstein, ``Distributed
  optimization and statistical learning via the {A}lternating {D}irection
  {M}ethod of {M}ultipliers,'' \emph{Foundations and Trends in Machine
  Learning}, vol.~3, pp. 1--124, 2011.

\bibitem{Oymak:2012}
S.~Oymak, A.~Jalali, M.~Fazel, Y.~C. Eldar, and B.~Hassibi, ``Simultaneously
  structured models with application to sparse and low-rank matrices,''
  \emph{CoRR}, vol. abs/1212.3753, 2012.

\bibitem{Chen:2014}
Y.~Chen, Y.~Chi, and A.~Goldsmith, ``Estimation of simultaneously structured
  covariance matrices from quadratic measurements,'' in \emph{Acoustics, Speech
  and Signal Processing (ICASSP), 2014 IEEE International Conference on}, May
  2014, pp. 7669--7673.

\bibitem{Mohan:2012}
K.~Mohan and M.~Fazel, ``Iterative learning algorithms for matrix rank
  minimization,'' \emph{Journal of Machine Learning Research}, vol.~13.

\bibitem{Meka:2009}
R.~Meka, P.~Jain, and I.~S. Dhillon, ``Guaranteed rank minimization via
  singular value projection,'' \emph{CoRR}, vol. abs/0909.5457, 2009.

\bibitem{Lin:2012}
F.~Lin, M.~Fardad, and M.~R. Jovanovi\'c, ``Sparse feedback synthesis via the
  {A}lternating {D}irection {M}ethod of {M}ultipliers,'' in \emph{Proceedings
  of the 2012 American Control Conference}, 2012, pp. 4765--4770.

\end{thebibliography}

\addtolength{\textheight}{-12cm}   

\end{document}